\newtheorem{thm}{Theorem}[section]
\newtheorem{cor}[thm]{Corollary}
\newtheorem{lem}[thm]{Lemma}
\theoremstyle{definition}
\newtheorem{rem}[thm]{Remark}
\numberwithin{equation}{section}
\newcommand{\dist}{\mbox{dist}\,}
\newcommand{\R}{\mathbb R}
\newcommand{\e}{\varepsilon}
\newcommand{\ov}{\overline}
\newcommand{\p}{\partial}
\newcommand{\comment}[1]{}
\newenvironment{myindentpar}[1]%
{\begin{list}{}%
         {\setlength{\leftmargin}{#1}}%
         \item[]%
}
{\end{list}}
\begin{document}

\title[Convex functionals without uniform convexity]{On approximating minimizers of convex functionals with a convexity constraint by singular Abreu equations without uniform convexity}
\author{Nam Q. Le}
\address{Department of Mathematics, Indiana University,
Bloomington, 831 E 3rd St, IN 47405, USA.}
\email{nqle@indiana.edu}

\thanks{The research of the author was supported in part by the National Science Foundation under grant DMS-1764248}
\subjclass[2010]{49K30, 35B40, 35J40, 35J96}
\keywords{Singular Abreu equation, convex functional, convexity constraint, second boundary value problem, Rochet-Chon\'e model, wrinkling patterns}

\begin{abstract}
We revisit the problem of approximating minimizers of certain convex functionals subject to a convexity constraint by  solutions of fourth order equations of Abreu type.
 This approximation problem was studied in previous works of Carlier-Radice (Approximation of variational problems with a convexity constraint by PDEs of Abreu type.
{\it Calc. Var. Partial Differential Equations.} {\bf 58} (2019), no. 5, Art. 170) and the author  (Singular Abreu equations and minimizers of convex functionals with a convexity constraint, arXiv:1811.02355v3, {\it  Comm. Pure Appl. Math.}, to appear), under the uniform convexity of both the Lagrangian and constraint barrier. By introducing a new approximating scheme, we completely remove the uniform convexity of both the Lagrangian and 
constraint barrier.  Our analysis is applicable to variational problems motivated by the original 2D Rochet-Chon\'e model in the monopolist's problem in Economics, and variational problems arising in 
the analysis of wrinkling patterns in floating elastic shells in Elasticity.
\end{abstract}
\maketitle

\section{Introduction}

In this note, we revisit the problem of approximating minimizers of certain convex functionals subject to a convexity constraint by  solutions of fourth order equations of Abreu type. This problem was investigated in previous works by Carlier-Radice \cite{CR} and the author \cite{Le18}, under the uniform convexity of both the Lagrangian and constraint barrier. Here, 
by introducing a new approximating scheme, we completely remove the uniform convexity of both the Lagrangian and 
constraint barrier.
We start by recalling this problem.
\subsection{Approximating minimizers of convex functionals subject to a convexity constraint}
\label{Q_sect}
Let $\Omega_0$ be a bounded, open, smooth, and convex domain in $\R^n$ ($n\geq 2$). Let $\Omega$ be a bounded, open, smooth, uniformly convex domain containing $\overline{\Omega_0}$.
Let $\varphi$ be a convex and smooth function defined in $\overline{\Omega}$.  Let $F(x, z, p):\R^n\times \R\times \R^n\rightarrow\R$ be a smooth Lagrangian which is convex in each of the variables $z\in\R$ and $p=(p_1,\cdots, p_n)\in\R^n$.
Consider  the following variational problem with a convexity constraint:
\begin{equation}
\label{p1}
\inf_{u\in \bar{S}[\varphi,\Omega_0]} \int_{\Omega_0} F(x, u(x), Du(x)) dx
\end{equation}
where
\begin{multline}
\label{barS}
\bar{S}[\varphi, \Omega_0]=\{ u: \Omega_0\rightarrow \R\mid u \text{ is convex, }\\ \text{ u admits a convex extension to }\Omega \text{ such that } u=\varphi\text{ on }\Omega\setminus \Omega_0\}.
\end{multline}
Note that elements of $\bar{S}[\varphi, \Omega_0]$ are Lipschitz continuous with Lipschitz constants bound from above by $\|D\varphi\|_{L^{\infty}(\Omega)}$ and hence $\bar{S}[\varphi, \Omega_0]$ is compact in the topology of uniform convergence. Under quite general assumptions on the convexity and growth of the Lagrangian $F$, one can show that (\ref{p1}) has a minimizer in $\bar{S}[\varphi, \Omega_0]$.

Due to the intrinsic difficulty of the convexity constraint, as elucidated in \cite{CR, Le18}, for practical purposes such as numerical computations, one wonders if 
minimizers of (\ref{p1}) can be well approximated in the uniform norm by solutions of some higher order equations whose global well-posedness can be established. The approximating schemes proposed in \cite{CR, Le18} use the second boundary value problem of fourth order equations of Abreu type which we now would like to make more precise.

Let $\psi$ be a smooth function in $\overline{\Omega}$ with $\inf_{\p\Omega}\psi>0$.
Fix $0\leq\theta<1/n$.
 For each $\e>0$,
 consider the following second boundary value problem for a uniform convex function $u_\e$:
\begin{equation}
  \left\{ 
  \begin{alignedat}{2}\e\sum_{i, j=1}^{n}U_\e^{ij}(w_\e)_{ij}~& =f_{\e}(\cdot, u_\e, Du_\e, D^2 u_\e;\varphi)~&&\text{in} ~\Omega, \\\
 w_\e~&= (\det D^2 u_\e)^{\theta-1}~&&\text{in}~ \Omega,\\\
u_\e ~&=\varphi~&&\text{on}~\p \Omega,\\\
w_\e ~&= \psi~&&\text{on}~\p \Omega.
\end{alignedat}
\right.
\label{Abe0}
\end{equation}
Here and what follows, $U_\e=(U_\e^{ij})_{1\leq i, j\leq n}$ is the cofactor matrix of the Hessian matrix $$D^2 u_\e=\left((u_\e)_{ij}\right)_{1\leq i, j\leq n}\equiv \left(\frac{\p^2 u_\e}{\p x_i \p x_j}\right)_{1\leq i, j\leq n}$$ and
\begin{multline}
\label{fe0}
f_{\e}(x, u_\e(x), Du_\e(x), D^2 u_\e(x);\varphi(x)) \\= \left\{\begin{array}{rl}
  \frac{\p F}{\p z}(x, u_\e(x), Du_\e(x)) -\displaystyle \sum_{i=1}^n \frac{\p}{\p x_i} \left(\frac{\p F}{\p p_i}(x, u_\e(x), Du_\e(x))\right)&  x\in \Omega_0,\\
\frac{1}{\e}(u_\e (x)-\varphi(x) ) & x\in  \Omega\setminus \Omega_0.
\end{array}\right.
\end{multline}
The fourth order expression $U^{ij} \left[(\det D^2 u)^{\theta-1}\right]_{ij}$ appears in several geometric contexts including K\"ahler geometry (such as the Abreu's equation when $\theta=0$; see \cite{Ab}) and affine geometry (such as the affine maximal surface equation when 
$\theta=\frac{1}{n+2}$; see \cite{TW00}). 
When the Lagrangian $F$ depends on the  gradient variables, the right hand side of (\ref{fe0}) contains the Hessian $D^2 u_\e$ of $u_\e$. Without further regularity for the convex function $u_\e$, the Hessian $D^2 u_\e$ can be just a measure-valued matrix. Thus, as in \cite{Le18}, we call fourth order equations of the type (\ref{Abe0})-(\ref{fe0}) singular Abreu equations. 
 
We note that the first two equations of the system (\ref{Abe0})-(\ref{fe0}) are critical points, with respect to compactly supported variations,  of  the following functional 
\begin{equation*}
J_{\e}(v)=\int_{\Omega_0}  F(x, v(x), Dv(x))dx +\frac{1}{2\e}\int_{\Omega\setminus\Omega_0} (v-\varphi)^2 dx-\e\int_{\Omega}  \frac{(\det D^2 v)^\theta-1}{\theta} dx.
\end{equation*}
When $\theta=0$, the integral $\int_{\Omega}  \frac{(\det D^2 v)^\theta-1}{\theta} dx$ is replaced by $\int_{\Omega}\log\det D^2 v dx.$ The requirement $0\leq \theta<1/n$ is to make $J_\e$ a convex functional.

The function $f_\e$ defined by (\ref{fe0}) is not continuous in general; this is usually due to the jump discontinuity through $\p\Omega_0$. Thus, the best global regularity one can expect for a solution to  (\ref{Abe0})-(\ref{fe0}) is $W^{4,p}(\Omega)$ for all $p<\infty$.

The questions we would like to ask are the following:
\begin{myindentpar}{1cm}
\it
({\bf Q1}) Does the system (\ref{Abe0})-(\ref{fe0}) have a uniformly convex solution $u_\e\in W^{4,p}(\Omega)$ (for all $p<\infty$) for each $\e>0$ small? \\
({\bf Q2}) If the answer to {\bf Q1} is yes, 
does $u_\e$ converge uniformly on compact subsets of $\Omega$ to a minimizer $u\in \bar{S}[\varphi,\Omega_0]$ of problem (\ref{p1})?
\end{myindentpar}

 Another way to rephrase the above questions is to study limiting properties of solutions, if any, to singular Abreu equations of the type (\ref{Abe0})-(\ref{fe0})  when $\e\rightarrow 0$.

The positive answers to questions {\bf Q1} and {\bf Q2} above have been given in  \cite[Theorem 5.3]{CR} and \cite[Theorem 2.3]{Le18} when $F$ and $\varphi$ satisfy certain structural conditions. These work require the uniform convexity of the Lagrangian $F(x, z, p)$ with respect to $z$
and also the uniform convexity of the barrier constraint $\varphi$. We recall these theorems here.
\begin{thm}(\cite[Theorem 5.3]{CR}) 
\label{CRthm} 
Let $\theta=0$. Let $\psi$ be a smooth function in $\overline{\Omega}$ with $\inf_{\p\Omega}\psi>0$.
Assume that $\varphi$ is uniformly convex in $\overline{\Omega}$ and that $F(x, z, p)= F^0(x, z)$ where $F^0$ is uniformly convex with respect to $z$, that is, 
$f^0(x, z):=\frac{\p F^0(x, z)}{\p z}$ satisfies for some $\alpha>0$
\begin{equation}
\label{f01}
(f^0(x, z)- f^0(x, \tilde z))(z-\tilde z)\geq \alpha |z-\tilde z|^2 ~\text{for all } x\in\Omega_0~\text{and all } z,\tilde z\in\R.
\end{equation}
Assume that, for  some continuous and increasing function $\eta:[0,\infty)\rightarrow [0,\infty)$, we have
\begin{equation}
\label{f02}
|f^0(x, z)|\leq \eta (|z|)~\text{for all } x\in\Omega_0~\text{and all } z\in\R.
\end{equation}
Then, for $\e>0$ small, the system (\ref{Abe0})-(\ref{fe0}) has  a uniformly convex solution $u_\e \in W^{4,q}(\Omega)$  for all $q\in (n,\infty)$. Moreover, when $\e\rightarrow 0$, $u_\e$ converges uniformly on compact subsets of $\Omega$ to the unique minimizer $u\in \bar{S}[\varphi,\Omega_0]$ of (\ref{p1}). 
\end{thm}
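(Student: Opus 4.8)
The plan is to argue in two stages: (i) for each fixed $\e>0$, solve the singular Abreu system (\ref{Abe0})--(\ref{fe0}) by a Leray--Schauder degree argument resting on a priori estimates; (ii) pass to the limit $\e\to 0$ by exploiting the variational structure of the functional $J_\e$ together with the uniform convexity hypotheses. The decisive simplification in this theorem is that $F=F^0(x,z)$ is gradient-independent: once $\|u_\e\|_{L^\infty}$ is controlled, (\ref{f02}) makes the right-hand side $f_\e$ in (\ref{fe0}) bounded (on $\Omega_0$ by $\eta(\|u_\e\|_{L^\infty})$, and on $\Omega\setminus\Omega_0$ for fixed $\e$), so (\ref{Abe0}) becomes a fourth-order equation with \emph{bounded} right-hand side, which is what keeps it tractable.

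\emph{A priori estimates and existence for fixed $\e$.} One begins with an $L^\infty$ bound for $u_\e$: since $u_\e$ is convex with $u_\e=\varphi$ on $\p\Omega$, its maximum is controlled by $\max_{\p\Omega}\varphi$, while a lower bound, and in fact the closeness of $u_\e$ to $\varphi$ on $\Omega\setminus\Omega_0$, follows from the penalization branch of $f_\e$ and the maximum principle. The crux is a two-sided bound $0<c\le\det D^2u_\e\le C$ on $\ov\Omega$. Writing $w_\e=(\det D^2u_\e)^{-1}$, one uses that the rows of the cofactor matrix $U_\e$ are divergence free, so $\sum_{i,j}U_\e^{ij}(w_\e)_{ij}$ can be put in divergence form and $w_\e$ solves a linear uniformly elliptic equation with bounded right-hand side $f_\e/\e$ and boundary datum $w_\e=\psi>0$; Pogorelov-- and maximum-principle--type arguments, using the uniform convexity of $\varphi$ to control $\det D^2u_\e$ near $\p\Omega$, then pin $\det D^2u_\e$ between positive constants. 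With $\det D^2u_\e$ pinched and $u_\e$ convex with interior gradient bounds, Caffarelli's strict convexity and $C^{1,\alpha}$ theory and the Caffarelli--Guti\'errez Harnack inequality for the linearized Monge--Amp\`ere operator $U_\e^{ij}$ yield interior $C^{2,\alpha}$ estimates; boundary regularity of Savin type, using the uniform convexity of $\Omega$ and smoothness of $\varphi,\psi$, makes them global, and bootstrapping through the two equations of (\ref{Abe0}) gives $u_\e\in W^{4,q}(\Omega)$ for all $q<\infty$ (only $W^{4,q}$, not $C^{4,\alpha}$, because $f_\e$ jumps across $\p\Omega_0$). With these closed bounds, one deforms $(F^0,U_\e)$ to a model problem with a known uniformly convex solution; the estimates persist along the homotopy, so the degree is well defined and homotopy invariant, producing a uniformly convex solution $u_\e$.

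\emph{Passage to the limit.} Next one upgrades the $L^\infty$ bound to a Lipschitz bound on $u_\e$ that is uniform in $\e$ on compact subsets of $\Omega$, via convexity and a comparison with $\varphi$ (bounding $u_\e$ above by $\varphi$ up to an $O(\e)$ error and below by affine supports). By Arzel\`a--Ascoli, $u_\e\to u$ uniformly on compact subsets along a subsequence, with $u$ convex. Comparing energies, $J_\e(u_\e)\le J_\e(v)$ for a fixed smooth uniformly convex competitor $v$ agreeing with $\varphi$ near $\p\Omega$: the term $-\e\int_\Omega\log\det D^2v\,dx$ is $O(\e)$, while $-\e\int_\Omega\log\det D^2u_\e\,dx$ is bounded below thanks to the upper bound on $\det D^2u_\e$, so $\frac{1}{2\e}\int_{\Omega\setminus\Omega_0}(u_\e-\varphi)^2\,dx$ stays bounded; hence $u=\varphi$ on $\Omega\setminus\Omega_0$ and $u\in\bar S[\varphi,\Omega_0]$. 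Lower semicontinuity of $w\mapsto\int_{\Omega_0}F^0(x,w)\,dx$ under uniform convergence, together with the energy comparison against an extension of any minimizer, forces $u$ to minimize (\ref{p1}); and (\ref{f01}) makes $z\mapsto F^0(x,z)$ strictly convex, so the minimizer is unique and the whole family $u_\e$ converges to it.

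\emph{Main obstacle.} The technical heart is the a priori estimate step: the two-sided bound on $\det D^2u_\e$ and the resulting Pogorelov/Savin-type second-derivative estimates, up to the boundary, for the degenerate fourth-order operator $\e\,U_\e^{ij}(w_\e)_{ij}$, obtained uniformly enough to survive the degree deformation. The uniform convexity of $\varphi$ enters precisely in the boundary control of the determinant, and the uniform convexity of $F^0$ in $z$ enters in forcing $u=\varphi$ off $\Omega_0$ and in the uniqueness of the minimizer; removing these two hypotheses is exactly what the rest of the paper is about.
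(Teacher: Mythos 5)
Your outline has the right architecture (a priori bounds plus Leray--Schauder for fixed $\e$, then a variational comparison as $\e\to 0$, with uniqueness from the uniform convexity of $F^0$ in $z$), which is indeed the route of \cite{CR} and of the proof of Theorem \ref{mthm} in this paper (which, when $\varphi$ is uniformly convex and $F^1\equiv 0$, covers exactly the scheme (\ref{Abe0})-(\ref{fe0})). But the limit passage as you wrote it rests on a false premise: you use ``$J_\e(u_\e)\le J_\e(v)$'', i.e.\ you treat $u_\e$ as a minimizer of $J_\e$ among convex functions with boundary value $\varphi$. It is not: the interior equations of (\ref{Abe0}) are the Euler--Lagrange equations of $J_\e$ only for compactly supported variations, while the second boundary condition $w_\e=\psi$ on $\p\Omega$ is \emph{imposed}, not the natural boundary condition of $J_\e$. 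Computing the first variation of $J_\e$ at $u_\e$ in a direction vanishing on $\p\Omega$ leaves the boundary remainder $-\e\int_{\p\Omega}\psi\,U_\e^{\nu\nu}\,\p_\nu(v-u_\e)\,dS$, so what convexity of $J_\e$ actually gives is an inequality of the form (\ref{Je}), with this boundary term on the right. The heart of the convergence proof is showing that this remainder is harmless as $\e\to 0$, which requires the uniform estimate $\e\int_{\p\Omega}((u_\e)_\nu^{+})^n\,dS\le C$ (the quantity $\eta_\e$ of (\ref{eta_def})--(\ref{etae})). That estimate does not come from energy comparison; it comes from the integration-by-parts/concavity a priori estimate (the inequality (\ref{tildew}), the Assertion, and (\ref{fe_u2})--(\ref{epn})), which simultaneously yields $\int_{\Omega\setminus\Omega_0}(u_\e-\varphi)^2\,dx\le C\e$ and hence $u\in\bar S[\varphi,\Omega_0]$. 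In your write-up both of these conclusions (membership in $\bar S[\varphi,\Omega_0]$ and $J(v)\ge J(u)$) are derived from the unjustified minimality of $u_\e$, so the key steps are unsupported.

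A second, related gap: to bound $-\e\int_\Omega\log\det D^2u_\e\,dx$ from below you invoke an upper bound on $\det D^2u_\e$. The two-sided determinant bounds you obtain in the fixed-$\e$ existence step degenerate as $\e\to 0$ (the right-hand side is $f_\e/\e$), so no uniform-in-$\e$ upper bound is available, and none is used in the actual proofs: instead one uses $\log\det D^2u_\e\le C(1+\Delta u_\e)$ and the divergence theorem, which reduces this term again to the boundary quantity $\eta_\e$ (see (\ref{udet}) and (\ref{ubdr})). Finally, a more minor misattribution: the uniform convexity of $\varphi$ is needed chiefly so that the competitor built from $\varphi$ is uniformly convex, making the $\e\int G(\det D^2\cdot)$ term of the competitor nonnegative in the limit (the (\ref{G0})-type estimate, here (\ref{Gpos})) and keeping the constants $C_d(\varphi,\e)$ in the a priori estimates nondegenerate --- not primarily for ``boundary control of the determinant''; the uniform convexity of $F^0$ in $z$ is what gives the $L^\infty(\Omega_0)$ control of $u_\e$ in \cite{CR} and the uniqueness of the minimizer. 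With the convexity inequality (\ref{Je}) and the $\eta_\e$-control in place of your energy comparison, the rest of your plan (Arzel\`a--Ascoli, lower semicontinuity, uniqueness, convergence of the full family) goes through as in the paper.
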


\begin{thm}(\cite[Theorem 2.3]{Le18}) 
\label{Lthm}
Assume $n=2$ and $0\leq \theta<1/n$. Let $\psi$ be a smooth function in $\overline{\Omega}$ with $\inf_{\p\Omega}\psi>0$.
Assume that $\varphi$ is uniformly convex in $\overline{\Omega}$ and that $F(x, z, p)= F^0(x, z) + F^1(x, p)$ where $F^0$  satisfies (\ref{f01}) and (\ref{f02}). 
Suppose that for some $M\geq 0$, we have for all $p\in\R^n$,
$$0\leq F^1_{p_i p_j}(x, p)\leq M I_n; |F^1_{p_i x_i}(x, p)| \leq  M (|p| +1) \text{ for all } x\in\Omega_0~\text{and for each } i.$$
Then, for $\e>0$ small and $\alpha>0$ sufficiently large, the system (\ref{Abe0})-(\ref{fe0}) has  a uniformly convex solution $u_\e \in W^{4,q}(\Omega)$  for all $q\in (n,\infty)$. Moreover, for $\alpha$ sufficiently large,  $u_\e$ converges, when $\e\rightarrow 0$, uniformly on compact subsets of $\Omega$ to the unique minimizer $u\in \bar{S}[\varphi,\Omega_0]$ of  (\ref{p1}). 
\end{thm}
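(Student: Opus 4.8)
The plan is to prove existence of the uniformly convex solution $u_\e$ for each fixed small $\e$ by a Leray--Schauder degree / method-of-continuity argument, and then to pass to the limit $\e\to 0$ by a variational comparison; the uniform convexity of $\varphi$ and the uniform convexity of $F^0$ in $z$ (with $\alpha$ large) are spent precisely on the a priori estimates that make both steps work. For existence I would fix $\e$, recast the system for the pair $(u_\e,w_\e)$ (or for $u_\e$ alone in $C^{2,\beta}(\ov\Omega)$), and run the continuity method along a family deforming the right-hand side $f_\e$ down to $0$, so that the base point is the second boundary value problem $\e U^{ij}_\e(w_\e)_{ij}=0$, $w_\e=(\det D^2u_\e)^{\theta-1}$, $u_\e=\varphi$, $w_\e=\psi$ on $\p\Omega$, whose solvability is by now known. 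Existence then reduces to a priori estimates along the deformation (allowed to depend on $\e$): a two-sided bound $0<\lambda(\e)\le\det D^2u_\e\le\Lambda(\e)$ on $\ov\Omega$ together with a $C^{2,\beta}(\ov\Omega)$ bound. Given these, the eigenvalues of $D^2u_\e$ are pinched between positive constants, so the linearized Monge--Amp\`ere operator $U^{ij}_\e\p_{ij}$ is uniformly elliptic with H\"older coefficients; Schauder/$L^p$ estimates applied to the $w_\e$-equation $\e U^{ij}_\e(w_\e)_{ij}=f_\e$ upgrade $w_\e$, hence $\det D^2u_\e$, hence (by Monge--Amp\`ere regularity) $u_\e$, and one bootstraps to $u_\e\in W^{4,q}(\Omega)$ for all $q\in(n,\infty)$ --- only $W^{4,q}$ and not $C^\infty$ globally because $f_\e$ has a jump across $\p\Omega_0$, while away from $\p\Omega_0$ the solution is smooth.

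The heart of the matter is the a priori estimate, carried out in the usual order. First, $\|u_\e\|_{C^0(\ov\Omega)}$ is controlled from convexity together with $u_\e=\varphi$ on $\p\Omega$ and a comparison using $f_\e=\e^{-1}(u_\e-\varphi)$ on $\Omega\setminus\Omega_0$; then $\|Du_\e\|_{C^0(\ov\Omega)}$ follows from convexity and the boundary data (a Lipschitz bound in terms of $\mathrm{osc}\,\varphi$ and $\dist(\Omega_0,\p\Omega)$). Next come the two-sided bounds on $\det D^2u_\e$. For the upper bound --- a Pogorelov-type interior second-derivative estimate for the Abreu operator, in the spirit of Trudinger--Wang's estimate for affine maximal surfaces --- I would apply the maximum principle to an auxiliary function built from $w_\e$ (or $\log w_\e$), a power of the distance to a tangent plane of $u_\e$, and $|Du_\e|^2$, and near $\p\Omega$ interpose a uniformly convex lower barrier coming from the uniform convexity of $\varphi$ to control $D^2u_\e$ in a boundary layer; this gives $\|D^2u_\e\|_{L^\infty(\ov\Omega)}\le C(\e)$. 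For the lower bound $\det D^2u_\e\ge\lambda(\e)>0$ --- the uniform convexity --- I would show $w_\e\le C(\e)$ by a maximum-principle/barrier argument: in $\Omega_0$ one rewrites the equation as $\e U^{ij}_\e(w_\e)_{ij}+F^1_{p_ip_j}(x,Du_\e)(u_\e)_{ij}=f^0(x,u_\e)-F^1_{p_ix_i}(x,Du_\e)$, whose right-hand side is bounded by $\eta(\|u_\e\|_{\infty})+M(\|Du_\e\|_{\infty}+1)$ and whose extra left-hand term $\trace(F^1_{pp}(x,Du_\e)D^2u_\e)$ is nonnegative, and one then bounds $w_\e$ against a suitable barrier, again using the uniform convexity of $\varphi$ on $\p\Omega$ and --- crucially --- the largeness of $\alpha$ to dominate the contributions of $F^1$. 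Finally, with $\det D^2u_\e$ pinched between positive constants, the two-dimensional setting gives strict convexity of $u_\e$ for free (a convex function of two variables with $\det D^2u_\e\ge\lambda>0$ cannot have a segment in its graph, which fails for $n\ge 3$), so Caffarelli's regularity applies, and together with the Pogorelov bound the eigenvalue pinching and the bootstrap above go through.

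For the limit $\e\to0$ I would first record uniqueness of the minimizer $u$ of (\ref{p1}): $v\mapsto\int_{\Omega_0}F^0(x,v)\,dx$ is strictly convex by (\ref{f01}), $v\mapsto\int_{\Omega_0}F^1(x,Dv)\,dx$ is convex, and $\bar S[\varphi,\Omega_0]$ is convex. Then I would use that $u_\e$ is the minimizer of the convex functional $J_\e$ among uniformly convex competitors with $u_\e=\varphi$ and $w_\e=\psi$ on $\p\Omega$. The family $\{u_\e\}$ is equi-Lipschitz uniformly in $\e$, so along a subsequence $u_\e\to u_*$ uniformly on compacts, and $u_*\in\bar S[\varphi,\Omega_0]$ because boundedness of the penalty $\tfrac1{2\e}\int_{\Omega\setminus\Omega_0}(u_\e-\varphi)^2\,dx$ forces $u_*=\varphi$ off $\Omega_0$ and convexity passes to the limit. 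For the upper bound $\limsup_{\e\to0}J_\e(u_\e)\le\int_{\Omega_0}F(x,u,Du)\,dx$ one tests $J_\e$ against a smooth uniformly convex approximation $u_\delta$ of $u$ (mollification plus $\delta$ times a fixed smooth uniformly convex function matching $\varphi$ on $\Omega\setminus\Omega_0$), uses that $\det D^2u_\delta$ is bounded so $\e\int_\Omega\frac{(\det D^2u_\delta)^\theta-1}{\theta}\,dx\to0$, and then lets $\delta\to0$; the matching lower bound $\int_{\Omega_0}F(x,u_*,Du_*)\,dx\le\liminf_{\e\to0}J_\e(u_\e)$ follows from weak lower semicontinuity of $v\mapsto\int_{\Omega_0}F(x,v,Dv)\,dx$ under the uniform and weak-$W^{1,\infty}$ convergence $u_\e\to u_*$ (convexity of $F$ in $p$), nonnegativity of the penalty, and the $\det$-term (evaluated along $u_\e$) tending to $0$ by the a priori bounds. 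Hence $u_*$ minimizes (\ref{p1}), so $u_*=u$, and since the limit is independent of the subsequence, $u_\e\to u$ uniformly on compact subsets of $\Omega$.

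\textbf{The main obstacle} is the a priori two-sided bound on $\det D^2u_\e$ --- the Pogorelov-type upper bound and, above all, the lower bound yielding uniform convexity --- in the presence of the Hessian-dependent singular term $F^1_{p_ip_j}(x,Du_\e)(u_\e)_{ij}$ in $f_\e$: this term is not a priori bounded and the operator $U^{ij}_\e\p_{ij}$ degenerates precisely where $\det D^2u_\e$ is small, which is the whole difficulty of Abreu-type equations. It is exactly here that $n=2$ and $\alpha$ large are used: the two-dimensional structure makes strict convexity automatic from a positive lower bound on $\det D^2u_\e$ and lets the linearized operator together with Caffarelli's $W^{2,p}$ theory absorb the singular term, while the largeness of the monotonicity constant $\alpha$ of $f^0$ relative to the constant $M$ controlling $F^1$ lets the coercivity of the $z$-dependence dominate the gradient-dependent perturbation from $F^1$, so that the maximum-principle/barrier arguments for $w_\e$ and the variational comparison both close. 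Once these estimates are in hand, the degree argument, the Caffarelli--Schauder bootstrap, and the $\Gamma$-convergence-type passage to the limit are by now fairly routine.
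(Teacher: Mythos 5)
Note first that this paper does not reprove Theorem \ref{Lthm}: it is quoted from \cite[Theorem 2.3]{Le18}, and the scheme of its proof is the one reproduced (in strengthened form) in Section \ref{pf_sec} for Theorem \ref{mthm}. Measured against that proof, your outline has the right architecture (degree theory from a priori estimates at fixed $\e$, then a variational comparison as $\e\to0$), but it misses, or misplaces, the two decisive estimates. The first gap is in the a priori bounds: you assert that $\|u_\e\|_{C^0(\ov\Omega)}$ and a global Lipschitz bound follow ``from convexity together with $u_\e=\varphi$ on $\p\Omega$ and a comparison''. Convexity and the Dirichlet data only give $u_\e\le\max_{\p\Omega}\varphi$ and an interior gradient bound; a convex function with these boundary values can be arbitrarily negative inside, and its outer normal derivative on $\p\Omega$ is not bounded by the data --- indeed the true proof never obtains an equi-Lipschitz bound up to $\p\Omega$, only the integral bound $\e\int_{\p\Omega}((u_\e)^+_\nu)^n\,dS\le C$ as in (\ref{epn}). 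The actual mechanism (in \cite{Le18}, and in Step~1 of the proof of Theorem \ref{mthm} here) is an integral estimate obtained by pairing the equation with $u_\e-\varphi$, exploiting the concavity of $G$ and the divergence-free cofactor matrix, which yields (\ref{fe_u2}); and it is exactly there that ``$\alpha$ large relative to $M$'' is spent, so that $-\alpha\int_{\Omega_0}|u_\e-\varphi|^2$ absorbs the quadratic contributions of $F^1_{p_ix_i}(x,Du_\e)(u_\e-\varphi)$ and $F^1_{p_ip_j}(x,Du_\e)(u_\e)_{ij}(u_\e-\varphi)$ and closes the $L^\infty$ and penalization bounds (in the present paper this role is instead played by Lemma \ref{clem}). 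You place $\alpha$ instead in a pointwise maximum-principle/barrier bound for $w_\e$; no such argument appears in the cited proof, and it is not clear it can work, since $F^1_{p_ip_j}(x,Du_\e)(u_\e)_{ij}$ is not a lower-order term in $w_\e$ and the operator $U^{ij}_\e\p_{ij}$ degenerates precisely where $\det D^2u_\e$ is small; the $W^{4,q}$ solvability with such a Hessian-dependent right-hand side in $n=2$ is the content of \cite[Theorems 2.1, 4.1]{Le18}, not a routine Pogorelov--Caffarelli bootstrap.

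The second gap is in the limit $\e\to0$: your argument rests on the claim that $u_\e$ minimizes $J_\e$ ``among uniformly convex competitors with $u_\e=\varphi$ and $w_\e=\psi$ on $\p\Omega$''. This is unjustified: $u_\e$ is only a critical point with respect to compactly supported variations, $w=\psi$ is a natural (not essential) boundary condition, your smooth competitors $u_\delta$ do not satisfy it, and in fact comparison with competitors sharing only the Dirichlet data produces boundary terms of both signs. The actual proof replaces minimality by the comparison inequality (\ref{Je}) (estimate (5.6) in \cite{Le18}), $J_\e(v)-J_\e(u_\e)\ge \e\int_{\p\Omega}\psi U^{\nu\nu}_\e\p_\nu(u_\e-\varphi)\,dS+\int_{\p\Omega_0}(v-u_\e)\nabla_pF^1(x,Du_\e)\cdot\nu_0\,dS$ for convex $v$ equal to $\varphi$ near $\p\Omega$, and controls these boundary terms, as well as $\e\int_\Omega G(\det D^2u_\e)\,dx$, only through the quantity $\eta_\e$ of (\ref{eta_def}) --- i.e.\ again through the key integral estimate you skipped. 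Moreover the uniform convexity of $\varphi$ is spent on the competitor side: one tests with $(1-\e)v+\e\varphi\in\bar S[\varphi,\Omega_0]$ so that $\det D^2\ge\e^n\det D^2\varphi>0$ and the analogue of (\ref{G0}) holds; your ``mollification plus $\delta$ times a uniformly convex function'' either leaves the admissible class (changing the data off $\Omega_0$) or leaves the competitor's $\log\det$/$\det^\theta$ term uncontrolled. Your uniqueness argument via strict convexity from (\ref{f01}) is fine, but as it stands the proposal does not close at either of the two places where the hypotheses ($\alpha$ large, $\varphi$ uniformly convex, $n=2$) are actually used.
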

In Theorem \ref{Lthm} and what follows, we use the following notation: $I_n$ is the identity $n\times n$ matrix and
$$F^{1}_{p_i p_j}(x, p)=\frac{\p^2 F^1(x, p)}{\p p_i \p p_j};\quad
F^{1}_{p_i x_j}(x, p)=\frac{\p^2 F^1(x, p)}{\p p_i \p x_j}; \nabla_p F^1(x, p)=\left(\frac{\p F^1(x, p)}{\p p_1},\cdots, \frac{\p F^1(x, p)}{\p p_n}\right).$$
\begin{rem}
Inspecting the proof of Theorem 5.3 in \cite{CR}, we find that Theorem \ref{CRthm} also holds for all $\theta \in [0, 1/n)$.
\end{rem}
From the variational analysis and practical models in Economics and Elasticity to be described below, it would be interesting to remove the uniform convexity assumptions in Theorems \ref{CRthm} and \ref{Lthm}. 
\begin{center}
\it
({\bf Q3}) Can we remove the uniform convexity assumptions on $F$ and $\varphi$  in Theorems \ref{CRthm} and \ref{Lthm}?
\end{center}

\subsection{Examples with non-uniformly convex Lagrangians} Our examples of convex functionals subject to a convexity constraint  arise in the Rochet-Chon\'e model of the 
monopolist's problem in Economics and variational problems arising in 
the analysis of wrinkling patterns in floating elastic shells in Elasticity. In these models, the Lagrangians $F(x, z, p)$ are convex but not uniformly convex with respect to $z$.
\\
{\it  The Rochet-Chon\'e model.}
The analysis in \cite{Le18} is applicable to the 2D 
 Rochet-Chon\'e model  perturbed by a strictly convex lower order term. It is not known if the analysis in \cite{Le18} is applicable to the original Rochet-Chon\'e model \cite{RC} where
 $$F(x, z, p)=\frac{1}{2}|p|^2 \gamma(x) -x\cdot p \gamma(x) + z \gamma (x).$$
  Rochet-Chon\'e modeled the monopolist problem in product line design with quadratic cost using maximization of the functional
$$\Phi(u)= \int_{\Omega_0} \{x\cdot D u(x) - \frac{1}{2}|D u(x)|^2- u(x)\} \gamma(x) dx.$$
Here $ \Phi(u)$ is  the monopolist's profit; 
u is the buyers' indirect utility function with bilinear valuation;
  $\Omega_0\subset\R^n$ is the collection of  types of agents; $\gamma$ is the relative frequency of different types of agents in the population. The function $\gamma$ is assumed to be nonnegative, bounded and Lipschitz continuous, that is,
  $$0\leq \gamma\leq C\quad\text{and } \|D \gamma\|_{L^{\infty}(\Omega_0)}\leq C.$$
  For a consumer of type $x\in\Omega_0$, the indirect utility $u(x)$ is computed via the formula 
$$u(x) =\max_{q\in Q} \{ x\cdot q-p(q)\}$$
where $Q\subset\R^n$ is the product line and $p: Q\rightarrow \R$ is a price schedule that the monopolist needs to both design to maximize her total profit $\Phi$. 
 Clearly, $u$  is convex and maximizing $\Phi(u)$ is equivalent to minimizing $\int_{\Omega_0} F(x, u(x), Du(x))$ among all convex functions $u$. For economic reasons, there are other conditions for $u$ outside $\Omega_0$; see \cite{RC} and also \cite{CLR1} for more details.  \\
 {\it Thin elastic shells.}
We also note that, in certain applications where $F$ is independent of the gradient variables, $F$ can be non-uniformly convex in $z$. A particular example arises in the analysis of wrinkling patterns in floating elastic shells by Tobasco \cite{T}. As discussed in \cite[Section 1.2.3]{T}, describing the leading order behavior of weakly curved floating shells lead to limiting problems which are dual to problems of the type:

Given a smooth function $q:\overline{\Omega_0}\subset\R^2\rightarrow \R$, minimize
\begin{equation}
\label{elas}
\int_{\Omega_0} \left(\frac{|x|^2}{2}-u(x)\right)\det D^2 q(x)~ dx
\end{equation}
over the set
$$\{u~\text{convex in }\R^2,~u=\frac{|x|^2}{2}~\text{in }\R^2\setminus \Omega_0\}.$$
Optimal functions in (\ref{elas}) are called optimal {\it Airy potential} in \cite{T}.
In this example,
$$F(x, z, p)=  \left(\frac{|x|^2}{2}-z\right)\det D^2 q(x).$$

\subsection{The main results}
In this note, we answer question {\bf Q3} at the end of section \ref{Q_sect} by completely removing both the uniform convexity of $F$ with respect to $z$ and the uniform convexity of $\varphi$. To do this, we introduce a new approximating scheme, slightly different from (\ref{Abe0})-(\ref{fe0}).

As in \cite{Le18} and motivated by the Rochet-Chon\'e model, we consider 
 Lagrangians of the form:
$$F(x, z, p)= F^0(x, z) + F^1 (x, p).$$
Let $$f^0(x, z):=\frac{\p F^0(x, z)}{\p z}.$$
We assume the following convexity and growth assumptions on $F^0$ and $F^1$. For some {\it nonnegative} constant $C_\ast$:
\begin{equation}
\label{F0}
(f^0(x, z)-f^0(x, \tilde z))(z-\tilde z)\geq 0; |f^0(x, z)|\leq \eta (|z|)~\text{for all } x\in\Omega_0~\text{and all } z,\tilde z\in\R
\end{equation}
where $\eta:[0,\infty)\rightarrow [0,\infty)$ is a continuous and increasing function.  Furthermore, for all $p\in\R^n$
\begin{equation}
\label{F1}
0\leq F^1_{p_i p_j}(x, p)\leq C_\ast I_n; |F^1_{p_i x_i}(x, p)| \leq  C_\ast (|p| +1) \text{ for all }x\in\Omega_0~\text{and for each } i.
\end{equation}
Let $\rho$ be a strictly convex defining function of $\Omega$, that is, 
\begin{equation}
\label{rhoeq}
\Omega:=\{x\in \R^n: \rho(x)<0\},~\rho=0 \text{ on } \p\Omega \text{ and }D\rho\neq 0 \text{ on }\p\Omega.
\end{equation} Let
\begin{equation}
\label{Cvar}
C_{\varphi} \\= \left\{\begin{array}{rl}
  0&  \text{if }\varphi \text{ is uniformly convex in } \overline{\Omega},\\
1 & \text{otherwise}.
\end{array}\right.
\end{equation}
 For $\e>0$,
 consider the following second boundary value problem for a uniform convex function $u_\e$:
\begin{equation}
  \left\{ 
  \begin{alignedat}{2}\e\sum_{i, j=1}^{n}U_\e^{ij}(w_\e)_{ij}~& =f_{\e}\left(\cdot, u_\e, Du_\e, D^2 u_\e; \varphi+ C_{\varphi} \e^{\frac{1}{3n^2}} (e^{\rho}-1)\right)~&&\text{in} ~\Omega, \\\
 w_\e~&= (\det D^2 u_\e)^{\theta-1}~&&\text{in}~ \Omega,\\\
u_\e ~&=\varphi~&&\text{on}~\p \Omega,\\\
w_\e ~&= \psi~&&\text{on}~\p \Omega.
\end{alignedat}
\right.
\label{Abe}
\end{equation}
Here,
\begin{multline}
\label{fe}
f_{\e}(x, u_\e(x), Du_\e(x), D^2 u_\e(x); \varphi (x)+ C_{\varphi} \e^{\frac{1}{3n^2}} (e^{\rho(x)}-1)) \\= \left\{\begin{array}{rl}
  \frac{\p F}{\p z}(x, u_\e(x), Du_\e(x)) -\displaystyle \sum_{i=1}^n \frac{\p}{\p x_i} \left(\frac{\p F}{\p p_i}(x, u_\e(x), Du_\e(x))\right)&  x\in \Omega_0,\\
\frac{1}{\e}\left(u_\e (x)-\varphi(x)-C_{\varphi} \e^{\frac{1}{3n^2}} (e^{\rho(x)}-1) \right) & x\in  \Omega\setminus \Omega_0.
\end{array}\right.
\end{multline}

Our main theorem states as follows.
\begin{thm}
\label{mthm}
Let $\Omega_0$ and $\Omega$ be bounded, open, smooth, and convex domains in $\R^n$ ($n\geq 2$) such that $\Omega$ is uniformly convex and contains $\overline{\Omega_0}$. Fix $0\leq\theta<1/n$.
Let $\psi$ be a smooth function in $\overline{\Omega}$ with $\inf_{\p\Omega}\psi>0$.
Let $\varphi$ be a convex and smooth function defined in $\overline{\Omega}$. 
Assume that (\ref{F0}) and (\ref{F1}) are satisfied. If $F^1\not\equiv 0$ then we assume further that $n=2$. 
Then the following hold.
\begin{myindentpar}{1cm}
(i) For $\e>0$ small, the system (\ref{Abe})-(\ref{fe}) has  a uniformly convex solution $u_\e \in W^{4,q}(\Omega)$  for all $q\in (n,\infty)$.\\
(ii) For $\e>0$ small, let $u_\e\in W^{4, q}(\Omega)$ $(q>n)$ be a solution to (\ref{Abe})-(\ref{fe}). After extracting a subsequence, $u_\e$ converges uniformly on compact subsets of $\Omega$ to a minimizer $u\in \bar{S}[\varphi,\Omega_0]$  of (\ref{p1}).  
\end{myindentpar}
\end{thm}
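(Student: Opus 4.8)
The plan is to follow the two--step approach of \cite{CR} and \cite{Le18}: for each fixed small $\e>0$, solve the modified fourth order system (\ref{Abe})--(\ref{fe}) by a degree--theoretic argument resting on global a priori estimates, and then let $\e\to0$, using $\e$--independent bounds together with a comparison ($\Gamma$--convergence type) argument to identify the limit as a minimizer of (\ref{p1}). The one structural novelty is that the penalization target $\varphi$ on $\Omega\setminus\Omega_0$ is replaced by $\tilde\varphi_\e:=\varphi+C_\varphi\,\e^{\frac{1}{3n^2}}(e^{\rho}-1)$, and two properties of this choice are used throughout. Since $\rho$ is a strictly convex defining function of $\Omega$, the function $e^{\rho}-1$ is uniformly convex in $\overline\Omega$, is nonpositive, and vanishes on $\p\Omega$; hence $\tilde\varphi_\e$ is uniformly convex with modulus $\gtrsim\e^{\frac{1}{3n^2}}$ and coincides with $\varphi$ on $\p\Omega$, yet $\tilde\varphi_\e\to\varphi$ uniformly as $\e\to0$. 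The first property provides, for each fixed $\e$, the nondegeneracy of $D^2u_\e$ that in \cite{CR, Le18} came from the uniform convexity of $\varphi$; the second guarantees that this device disappears in the limit, which is precisely why the limiting function is only required to be convex.

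For part (i), I would deform (\ref{Abe})--(\ref{fe}) through a one--parameter family of second boundary value problems to the solvable pure Abreu problem $\e\,U^{ij}(w)_{ij}=0$, $w=(\det D^2u)^{\theta-1}$, $u=\varphi$, $w=\psi$ on $\p\Omega$, and then invoke the Leray--Schauder degree theory in $W^{4,q}(\Omega)$, as in \cite{Le18}. The essential work is a chain of a priori estimates, uniform along the deformation though allowed to depend on $\e$: (a) a two--sided $C^0$ bound for $u_\e$, hence a gradient bound, from convexity, the data $u_\e=\varphi$ on $\p\Omega$, the penalization term $\frac{1}{\e}(u_\e-\tilde\varphi_\e)$ in $\Omega\setminus\Omega_0$ (which has the sign needed for the maximum principle and replaces the strong monotonicity of $f^0$ used in \cite{CR, Le18}), and (\ref{F0})--(\ref{F1}); (b) Pogorelov--type interior and boundary estimates giving $c(\e)I_n\le D^2u_\e\le C(\e)I_n$ --- here the uniform convexity of the penalization target $\tilde\varphi_\e$ furnishes, near the sensitive interface $\p\Omega_0$, a uniformly convex barrier for $u_\e$ that keeps $\det D^2u_\e$ from degenerating, and when $F^1\not\equiv0$ one invokes the two--dimensional techniques of \cite{Le18} for singular Abreu equations, whose right hand side depends on $D^2u_\e$, whence the restriction $n=2$; (c) the Caffarelli--Guti\'errez H\"older estimate for $w_\e$ (hence $\det D^2u_\e\in C^{\alpha}$ and $u_\e\in C^{2,\alpha}$), followed by $L^q$ Calder\'on--Zygmund and Schauder estimates for the linearized equation --- whose right hand side is merely piecewise smooth, with a jump across $\p\Omega_0$ --- to reach $u_\e\in W^{4,q}(\Omega)$ for all $q<\infty$, uniform convexity of $u_\e$ being contained in (b). I expect step (b) to be the main obstacle: producing the ellipticity estimates without the uniform convexity of $\varphi$ is exactly what the $\tilde\varphi_\e$ correction is engineered to make possible.

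For part (ii), I would first note that $\|u_\e\|_{C^0(\Omega)}$ and $\|Du_\e\|_{L^\infty(\Omega)}$ are bounded in terms of $\varphi$ and the geometry of $\Omega_0\Subset\Omega$ alone, so the convex functions $u_\e$ are equi--Lipschitz, and then establish the $\e$--independent energy bound $J_\e(u_\e)\le C$ by comparison with a suitable $\e$--independent admissible competitor, as in \cite{CR, Le18} (here $J_\e$ denotes the functional of the introduction with $\varphi$ replaced by $\tilde\varphi_\e$ in the penalization term). Since $\theta<1$, Jensen's inequality together with the gradient bound and $\int_\Omega\det D^2u_\e=|Du_\e(\Omega)|$ gives $\e\int_\Omega(\det D^2u_\e)^\theta=O(\e)$; combined with the signs of the remaining terms, $J_\e(u_\e)\le C$ forces $\frac1\e\int_{\Omega\setminus\Omega_0}(u_\e-\tilde\varphi_\e)^2\le C$, hence $\int_{\Omega\setminus\Omega_0}(u_\e-\tilde\varphi_\e)^2\to0$. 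By the Arzel\`a--Ascoli theorem a subsequence of $u_\e$ converges uniformly on compact subsets of $\Omega$ to a convex function $u$; since $\tilde\varphi_\e\to\varphi$ uniformly, $u=\varphi$ on $\Omega\setminus\Omega_0$, and therefore $u\in\bar S[\varphi,\Omega_0]$.

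It remains to show that $u$ minimizes (\ref{p1}). Given any $v\in\bar S[\varphi,\Omega_0]$, I would first approximate $v$ by smooth uniformly convex $v_\delta$ with $v_\delta=\varphi$ on $\Omega\setminus\Omega_0$ and $\int_{\Omega_0}F(x,v_\delta,Dv_\delta)\to\int_{\Omega_0}F(x,v,Dv)$ as $\delta\to0$ (mollification plus a vanishing uniformly convex correction, using the equi--Lipschitz bounds and continuity of $F$), and then compare $u_\e$ with the $\e$--adapted competitor $\tilde v_{\delta,\e}:=v_\delta+C_\varphi\,\e^{\frac{1}{3n^2}}(e^{\rho}-1)$, which is still uniformly convex, equals $\varphi$ on $\p\Omega$, and equals $\tilde\varphi_\e$ on $\Omega\setminus\Omega_0$, so that its penalization term vanishes. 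The comparison scheme of \cite{CR, Le18} --- which also requires a boundary--layer modification of $\tilde v_{\delta,\e}$ near $\p\Omega$ to meet the condition $w=\psi$ there, at a cost tending to $0$ as $\e\to0$ --- then yields
\[
\int_{\Omega_0}F(x,u_\e,Du_\e)\,dx-\e\int_\Omega\frac{(\det D^2u_\e)^\theta-1}{\theta}\,dx\le\int_{\Omega_0}F(x,\tilde v_{\delta,\e},D\tilde v_{\delta,\e})\,dx-\e\int_\Omega\frac{(\det D^2\tilde v_{\delta,\e})^\theta-1}{\theta}\,dx+o_\e(1).
\]
Letting $\e\to0$ along the subsequence --- the left determinant term tends to $0$ by the estimate above, the right one by the smoothness of $v_\delta$, and $\int_{\Omega_0}F(x,u_\e,Du_\e)\to\int_{\Omega_0}F(x,u,Du)$ because $u_\e\to u$ uniformly, $Du_\e\to Du$ a.e.\ and boundedly, and $F$ is continuous --- and then $\delta\to0$, gives $\int_{\Omega_0}F(x,u,Du)\le\int_{\Omega_0}F(x,v,Dv)$ for every $v\in\bar S[\varphi,\Omega_0]$, so $u$ is a minimizer of (\ref{p1}). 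A subtle point here, besides the collar modification, is that the competitor must be shifted by the \emph{same} factor $\e^{\frac{1}{3n^2}}(e^{\rho}-1)$ before $\delta$ is sent to $0$, so that the penalization term stays bounded instead of blowing up like $\e^{-1}$.
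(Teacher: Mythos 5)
Your overall architecture coincides with the paper's (shifted penalization target $\varphi+C_\varphi\e^{\frac{1}{3n^2}}(e^{\rho}-1)$, degree theory plus a priori estimates for (i), a comparison argument with competitors shifted by the same correction for (ii)), and your account of what the shift buys against the loss of uniform convexity of $\varphi$ is correct. But the proposal is missing the ingredient that removes the uniform convexity of $F^0$ in $z$, which is the real crux. In \cite{CR,Le18} the $\e$-uniform $L^\infty$ bound on $u_\e$ --- equivalently the bound on $\e\int_{\p\Omega}((u_\e)^+_\nu)^n\,dS$ obtained by testing the equation against $u_\e-\tilde u$ and integrating by parts --- comes from the strong monotonicity of $f^0$, which turns $\int_{\Omega_0}-f^0(x,u_\e)(u_\e-\tilde u)\,dx$ into a good negative quadratic term. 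You assert the two-sided $C^0$ bound follows ``from convexity, the boundary data and the penalization term,'' but convexity and boundary data only give the upper bound; the lower bound, and the control of the $\Omega_0$-contribution $\int_{\Omega_0}-f_\e(u_\e-\tilde u)\,dx$ (which, when $F^1\not\equiv0$, produces after integration by parts terms of size $\|u_\e\|_{L^\infty(\Omega_0)}$ and $\|u_\e\|^2_{L^\infty(\Omega_0)}$, cf.\ (\ref{Ae})--(\ref{bounde1})), is precisely what is lost when $f^0$ is merely monotone. The paper's new input is the convexity lemma (Lemma \ref{clem}, Corollary (\ref{clem4})): $\|u_\e\|_{L^\infty(\Omega_0)}\le C+C\int_{\Omega\setminus\Omega_0}|u_\e|\,dx$, which transfers these bad interior terms to the annulus, where the penalization contribution $-\frac1\e\int_{\Omega\setminus\Omega_0}|u_\e-\tilde u|^2dx$ absorbs them for $\e$ small, yielding (\ref{epn}) and then the uniform bound (\ref{ueb}). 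Nothing in your outline plays this role; the remark that the penalization ``replaces the strong monotonicity of $f^0$'' gestures at the right mechanism on $\Omega\setminus\Omega_0$ but leaves the $\Omega_0$-terms uncontrolled, and there is no maximum principle for the fourth order system to fall back on.

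The same gap undermines your route to (ii). You propose to obtain $J_\e(u_\e)\le C$ ``by comparison with an admissible competitor,'' but $u_\e$ solves the second boundary value problem and is not known to minimize $J_\e$ over any class containing your competitor; the comparison inequality one can actually prove from convexity of $J_\e$ and integration by parts is (\ref{Je}), which carries the boundary term $\e\int_{\p\Omega}\psi U_\e^{\nu\nu}\p_\nu(u_\e-\tilde u)\,dS$. Its lower bound $-C\e^{1/n}\eta_\e^{n-1}$ is $o_\e(1)$ only because $\eta_\e\le C$, and that bound is again (\ref{epn}) --- the very estimate whose proof requires the missing lemma. So your claims that $\|u_\e\|_{C^0}$ and the boundary-layer cost are controlled ``in terms of $\varphi$ and the geometry alone'' are not delivered by the cited schemes once the uniform convexity of $F^0$ is dropped; without a Lemma \ref{clem}-type absorption argument the chain of estimates does not close. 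A smaller point: the exponent $\frac1{3n^2}$ is not a free parameter in this chain --- it is chosen so that $\det D^2\tilde u\gtrsim\e^{1/(3n)}$ and hence $\e[C_d(\varphi,\e)]^{-3n}\le C$ in (\ref{fe_u2}) --- whereas your outline never explains why this particular size of the correction is admissible.
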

Several remarks are in order.
\begin{rem} 
 Without the uniform convexity of $F$ with respect to $z$, minimizers of the problem (\ref{p1}) with the convexity constraint (\ref{barS}) can be non-unique.
As such, each convergent subsequence of $\{u_\e\}$ converges to a minimizer of (\ref{p1}) as stated in Theorem \ref{mthm} (ii). It would be interesting to investigate whether the approximating scheme (\ref{Abe})-(\ref{fe})
selects a distinguished minimizer of the problem (\ref{p1}) when it has several minimizers.
\end{rem}
\begin{rem}
When $\varphi$ is not uniformly convex, the addition of $ \e^{\frac{1}{3n^2}} (e^{\rho}-1)$ to $\varphi$ is to make the new function  ``sufficiently'' uniformly convex. The choice of the exponent $\frac{1}{3n^2}$ (or any positive number not larger than this) is motivated by the need to establish uniform bounds for $u_\e$ in the a priori estimates for solutions to  (\ref{Abe})-(\ref{fe}); see (\ref{Cd}) and (\ref{fe_u2}).
\end{rem}
\begin{rem} Let $G(t)$ be an antiderivative of $t^{\theta-1}$. 
One of the crucial information in the proof of the convergence of solutions of (\ref{Abe0})-(\ref{fe0}) to a minimizer of (\ref{p1}) is a variant of the estimate
 \begin{equation}
\label{G0}
\liminf_{\e} \e\int_{\Omega} G(\det D^2 v)dx\geq 0 ~\text{for all } v\in \bar S[\varphi,\Omega_0].
\end{equation}
\begin{myindentpar}{1cm}
(a) When $\varphi$
 is uniformly convex in $\overline{\Omega}$, for any $v\in \bar S[\varphi,\Omega_0]$, (\ref{G0}) was shown to be true with $v$ being replaced by $(1-\e)v + \e\varphi \in \bar S[\varphi,\Omega_0]$
 in \cite[Proposition 3.5]{CR} and \cite[inequality (5.15)]{Le18}.
 When the uniform convexity of $\varphi$ is removed,  unless $\theta>0$, (\ref{G0}) might fail for all $v\in \bar S[\varphi,\Omega_0]$ as in the case $\varphi$ being a constant for which $\bar S[\varphi,\Omega_0]=\{\varphi\}$.\\
(b) On the other hand,  for any convex $\varphi$, the estimate (\ref{G0}) holds for $v\in \bar S[\varphi,\Omega_0]$ being replaced by $v + C_{\varphi} \e^{\frac{1}{3n^2}} (e^{\rho}-1)$; see (\ref{Gpos}). This somehow indicates the advantage of our approximating scheme.  
\end{myindentpar}
\end{rem}

In Theorem \ref{mthm} and in two dimensions, we can replace the convexity of $F^0$ in (\ref{F0}) by a semi-convexity condition as long as the function $F^1$ is highly uniformly convex with respect to $p$. Moreover, the whole sequence of solutions $u_\e$ to (\ref{Abe})-(\ref{fe}) converges to the unique minimizer $u\in \bar{S}[\varphi,\Omega_0]$  of (\ref{p1}).
This is the content of the next theorem.

\begin{thm}
\label{ethm}
Let $n=2$.
Let $\Omega_0$ and $\Omega$ be bounded, open, smooth, and convex domains in $\R^n$ such that $\Omega$ is uniformly convex and contains $\overline{\Omega_0}$. Fix $0\leq\theta<1/n$.
Let $\psi$ be a smooth function in $\overline{\Omega}$ with $\inf_{\p\Omega}\psi>0$.
Let $\varphi$ be a convex and smooth function defined in $\overline{\Omega}$. 
Assume that the following conditions (\ref{F0l}) and (\ref{F1s}) are satisfied for some positive constants $C_b, C_l, \underbar C, C_\ast$:
\begin{equation}
\label{F0l}
\frac{\p^2 F^0}{\p z^2} (x, z)= \frac{\p f^0}{\p z}(x, z)\geq -C_b,~|f^0(x, z)| \leq C_l (1+ |z|)~~\text{for all } x\in\Omega_0~\text{and all } z\in\R;
\end{equation}
\begin{equation}
\label{F1s}
\underbar C I_2 \leq F^1_{p_i p_j}(x, p)\leq C_\ast I_2; |F^1_{p_i x_i}(x, p)| \leq  C_\ast (|p| +1) \forall x\in\Omega_0, \forall p\in\R^n~\text{and for each } i.
\end{equation}
Then the following hold.
\begin{myindentpar}{1cm}
(i) For $\e>0$ small, the system (\ref{Abe})-(\ref{fe}) has  a uniformly convex solution $u_\e \in W^{4,q}(\Omega)$  for all $q\in (n,\infty)$.\\
(ii) For $\e>0$ small, let $u_\e\in W^{4, q}(\Omega)$ $(q>n)$ be a solution to (\ref{Abe})-(\ref{fe}). 
Assume that $\underbar C$ is large (depending only on $C_b$ and $\Omega_0$).  When $\e\rightarrow 0$, the sequence
$\{u_\e\}$ converges uniformly on compact subsets of $\Omega$ to the unique minimizer $u\in \bar{S}[\varphi,\Omega_0]$  of (\ref{p1}).  
\end{myindentpar}
\end{thm}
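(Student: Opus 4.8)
The plan is to follow the two-part argument by which Theorem~\ref{mthm} is proved — uniform a priori estimates together with a degree argument for part~(i), and compactness together with an energy comparison for part~(ii) — adapting each step to the weaker hypothesis~(\ref{F0l}). The guiding principle is that in two dimensions the uniform convexity of $F^1$ in $p$ provided by~(\ref{F1s}) takes over the roles played, in the earlier results, by the convexity of $f^0$ in $z$ (for the a priori estimates, where $\underbar C>0$ suffices) and by its uniform convexity (for uniqueness of the minimizer, where $\underbar C$ must be large). The modified barrier $\varphi+C_\varphi\e^{\frac1{3n^2}}(e^\rho-1)$ and the penalization enter exactly as in~(\ref{Abe})--(\ref{fe}).

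For part~(i), I would reprove, for a uniformly convex solution $u_\e\in W^{4,q}(\Omega)$ of~(\ref{Abe})--(\ref{fe}), the chain of a priori bounds used for Theorem~\ref{mthm}: a $C^0(\ov\Omega)$ bound (upper bound from convexity and $u_\e=\varphi$ on $\p\Omega$; lower bound from the equation and the Aleksandrov maximum principle for $U_\e^{ij}\p_{ij}$), a global $C^1$ bound, the uniform convexity bounds $c\le\det D^2u_\e\le C$ with the attendant Hessian bounds, then $W^{2,p}(\Omega)$, $C^{2,\alpha}(\Omega)$ via Caffarelli--Guti\'errez and Trudinger--Wang estimates, and finally $W^{4,q}(\Omega)$ for every $q<\infty$ from $L^p$ and Schauder estimates for the linearized system. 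Wherever~(\ref{F0}) was used through the monotonicity $(f^0(x,z)-f^0(x,\tilde z))(z-\tilde z)\ge0$, one now uses only $(f^0(x,z)-f^0(x,\tilde z))(z-\tilde z)\ge-C_b|z-\tilde z|^2$ from~(\ref{F0l}): on testing the first equation of~(\ref{Abe}) against $u_\e-\varphi$ and integrating by parts, the indefinite contribution of $f^0(x,u_\e)$ is, by the linear growth $|f^0(x,z)|\le C_l(1+|z|)$ together with the $C^0$ bound, merely bounded, while the divergence term from $F^1$ produces, after one integration by parts and the convexity inequality $(\nabla_pF^1(x,Du_\e)-\nabla_pF^1(x,D\varphi))\cdot(Du_\e-D\varphi)\ge\underbar C|Du_\e-D\varphi|^2$, the favorable quantity $\underbar C\int_{\Omega_0}|Du_\e-D\varphi|^2$; the hypothesis $n=2$ enters here as in Theorem~\ref{mthm}, because the $p$-dependence of $F^1$ puts $D^2u_\e$ into the right-hand side of~(\ref{fe}) and the two-dimensional regularity theory for $U^{ij}[(\det D^2\cdot)^{\theta-1}]_{ij}$ is needed to close the estimates. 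Existence then follows, exactly as for Theorem~\ref{mthm}, by a Leray--Schauder degree argument, these bounds being uniform along the chosen homotopy.

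For part~(ii), the bounds of~(i) give, along a subsequence, $u_\e\to u$ uniformly on compact subsets of $\Omega$ with $u$ convex; testing~(\ref{Abe}) against $u_\e-\varphi-C_\varphi\e^{\frac1{3n^2}}(e^\rho-1)$ and using the uniform bounds of~(i) forces $\frac1{\e}\int_{\Omega\setminus\Omega_0}(u_\e-\varphi-C_\varphi\e^{\frac1{3n^2}}(e^\rho-1))^2=O(1)$, hence $u=\varphi$ on $\Omega\setminus\Omega_0$ and $u\in\bar S[\varphi,\Omega_0]$. Since any $v_0,v_1\in\bar S[\varphi,\Omega_0]$ agree with $\varphi$ on $\p\Omega_0$, one has $v_1-v_0\in H^1_0(\Omega_0)$, so along $v_t=(1-t)v_0+tv_1\in\bar S[\varphi,\Omega_0]$,
\begin{align*}
\frac{d^2}{dt^2}\int_{\Omega_0}F(x,v_t,Dv_t)\,dx
&\ge -C_b\norm{v_1-v_0}_{L^2(\Omega_0)}^2+\underbar C\,\norm{Dv_1-Dv_0}_{L^2(\Omega_0)}^2\\
&\ge \bigl(\underbar C-C_bC_P(\Omega_0)\bigr)\norm{Dv_1-Dv_0}_{L^2(\Omega_0)}^2
\end{align*}
with $C_P(\Omega_0)$ the Poincar\'e constant of $\Omega_0$, so once $\underbar C>C_bC_P(\Omega_0)$ the functional in~(\ref{p1}) is strictly convex on $\bar S[\varphi,\Omega_0]$ and~(\ref{p1}) has a unique minimizer $u^\ast$. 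Next, comparing the value of $J_\e$ (with $\varphi$ replaced by $\varphi+C_\varphi\e^{\frac1{3n^2}}(e^\rho-1)$) at $u_\e$ with its value at $w+C_\varphi\e^{\frac1{3n^2}}(e^\rho-1)$ for $w\in\{u,u^\ast\}$, by testing the Euler--Lagrange system against the difference and using the convexity of $F^1$ in $p$, of the penalization, and of $v\mapsto-\e\int_\Omega G(\det D^2v)$ (valid for $0\le\theta<1/n$), the semi-convexity of $F^0$ contributing only the defect $\tfrac{C_b}{2}\int_{\Omega_0}(u_\e-w-C_\varphi\e^{\frac1{3n^2}}(e^\rho-1))^2$, and handling the boundary terms on $\p\Omega$ as in \cite{CR,Le18} — then passing to the limit with the variant of~(\ref{G0}) for these comparison functions recorded in~(\ref{Gpos}), the uniform bound $\det D^2u_\e\le C$ of~(i), and the lower semicontinuity of $v\mapsto\int_{\Omega_0}F(x,v,Dv)$ — yields $\lim_{\e\to0}J_\e(u_\e)=\int_{\Omega_0}F(x,u,Du)\,dx$ (with $w=u$ the defect tends to $0$) and $\int_{\Omega_0}F(x,u,Du)\,dx\le\int_{\Omega_0}F(x,u^\ast,Du^\ast)\,dx+\tfrac{C_b}{2}\norm{u-u^\ast}_{L^2(\Omega_0)}^2$ (with $w=u^\ast$). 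The displayed inequality together with Poincar\'e and the optimality of $u^\ast$ also gives $\int_{\Omega_0}F(x,u,Du)\,dx-\int_{\Omega_0}F(x,u^\ast,Du^\ast)\,dx\ge\tfrac{\underbar C-C_bC_P(\Omega_0)}{2C_P(\Omega_0)}\norm{u-u^\ast}_{L^2(\Omega_0)}^2$; combined with the previous bound this forces $u=u^\ast$ once $\underbar C$ is large in terms of $C_b$ and $\Omega_0$, and since every subsequential limit of $\{u_\e\}$ then equals $u^\ast$, the whole sequence converges to $u^\ast$.

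The step I expect to be the main obstacle is the uniform convexity bound $c\le\det D^2u_\e\le C$ of~(i): it is here that the loss of monotonicity of $f^0$ in $z$ genuinely bites, so the $z$-dependent lower-order terms in~(\ref{fe}) must be absorbed using only the $C^0$ bound and the linear growth in~(\ref{F0l}), which in turn forces one to track carefully how the uniform convexity of the modified barrier $\varphi+C_\varphi\e^{\frac1{3n^2}}(e^\rho-1)$ — which degenerates like $\e^{\frac1{3n^2}}$ — propagates through the boundary and Pogorelov-type estimates; this is precisely what pins down the exponent $\tfrac1{3n^2}$ (cf.\ the remarks following Theorem~\ref{mthm}). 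A secondary difficulty is to make all these bounds uniform along the homotopy used in the degree argument, not merely at the solutions $u_\e$ themselves.
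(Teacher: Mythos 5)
There is a genuine gap, and it sits exactly at the paper's main new idea: the uniform $L^\infty(\Omega)$ bound (\ref{ueb}) for $u_\e$ without any monotonicity of $f^0$ in $z$. You assert the $C^0$ bound up front (``upper bound from convexity \dots; lower bound from the equation and the Aleksandrov maximum principle for $U_\e^{ij}\p_{ij}$'') and then use that bound, together with the linear growth in (\ref{F0l}), to declare the $f^0$ contribution in the subsequent testing argument ``merely bounded.'' This is circular: in this problem the testing argument is precisely what produces the $C^0$ bound, and the maximum-principle mechanism you invoke does not apply as stated — the operator $U_\e^{ij}\p_{ij}$ in (\ref{Abe}) acts on $w_\e$, not on $u_\e$, and its right-hand side contains the penalization $\frac{1}{\e}(u_\e-\tilde u)$ on $\Omega\setminus\Omega_0$, which is not a priori controlled. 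What the paper actually does is: test to get the boundary-flux inequality (\ref{fe_u2}); estimate the $\Omega_0$-contribution, which under (\ref{F0l}) is only \emph{quadratic} in $\|u_\e\|_{L^\infty(\Omega_0)}$ (the modified bound $A_\e\le C+C_1\|u_\e\|^2_{L^\infty(\Omega_0)}$ replacing (\ref{Ae})); and then invoke the new convexity Lemma \ref{clem} (via (\ref{clem4}) and (\ref{clem2})) to control $\|u_\e\|^2_{L^\infty(\Omega_0)}$ by $C+C\int_{\Omega\setminus\Omega_0}|u_\e-\tilde u|^2$, so that all bad terms are absorbed by the good penalization term $-\frac1\e\int_{\Omega\setminus\Omega_0}|u_\e-\tilde u|^2$ once $\e$ is small, as in (\ref{ep_1})--(\ref{epn}). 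Your proposal contains no substitute for this mechanism, and without it neither (\ref{ueb}) nor the flux bound $\eta_\e\le C$ in (\ref{etae}) — on which all of part (ii) rests — is available. (Relatedly, your closing diagnosis is off: the loss of monotonicity bites in this $L^\infty$/flux step, not in the Pogorelov-type determinant estimates, which for fixed $\e$ follow from \cite{Le18} once (\ref{ueb}) is known; and the exponent $\frac1{3n^2}$ is fixed by $\e[C_d(\varphi,\e)]^{-3n}\le C$ in (\ref{fe_u2}) and by (\ref{Gpos}), not by boundary Pogorelov estimates.)

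A second, smaller flaw: in part (ii) you pass to the limit in $\e\int_\Omega G(\det D^2u_\e)\,dx$ using ``the uniform bound $\det D^2u_\e\le C$ of (i).'' No $\e$-uniform Hessian bound is established (the fourth-order estimates for fixed $\e$ degenerate as $\e\to0$, the right-hand side being of size $\e^{-1}$ and involving $[C_d(\varphi,\e)]^{-1}$); the paper instead uses $G(\det D^2u_\e)\le C(1+\Delta u_\e)$, the divergence theorem, and the flux bound $\eta_\e\le C$ as in (\ref{udet}) — again relying on the missing Step 1 estimates. Apart from these points, your uniqueness-plus-identification scheme in (ii) — quantitative strict convexity of $J$ on $\bar S[\varphi,\Omega_0]$ via Poincar\'e for $\underbar C>C_bC_P(\Omega_0)$, combined with an almost-minimality inequality $J(u)\le J(u^\ast)+\frac{C_b}{2}\|u-u^\ast\|^2_{L^2(\Omega_0)}$ carrying the semi-convexity defect — is a legitimate variant of the paper's route, which instead absorbs the defect before passing to the limit using the trace inequality (\ref{tineq}) and the condition (\ref{bC}), turning (\ref{Je1}) into (\ref{Je2}) (whose extra term lives on $\p\Omega_0$ and vanishes because $u_\e\to\varphi$ there), and then proves uniqueness separately by the midpoint argument (\ref{uvcnx}). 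Your variant would be acceptable once the uniform estimates of Step 1 are restored and the comparison inequality (\ref{Je1}) — including the $\p\Omega_0$ boundary term from integrating the $F^1$ term by parts — is justified as in \cite{Le18}.
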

\begin{rem}
As explained in detail in \cite[Section 1.3]{Le18}, for a gradient-dependent 
Lagrangian $F$, nothing is known in dimensions $n\geq 3$ about the solvability of the singular Abreu equations (\ref{Abe})-(\ref{fe}) in suitable Sobolev spaces. This is the main reason why we restrict ourselves in this paper to dimensions $n=2$ when the Lagrangians $F$ depend on the gradient variables $p$. 

\end{rem}

Key in the proof of the existence of a uniformly convex solution $u_\e \in W^{4,q}(\Omega)$ to the system (\ref{Abe})-(\ref{fe}) is the derivation of a priori estimates. Crucial ingredients in the convergence proof of $u_\e$ are their uniform a priori estimates with respect to $\e$ small. The uniform convexity of $F$ with respect to $z$ in \cite{CR, Le18} allows us to control $\|u_\e\|_{L^{\infty}(\Omega_0)}$. Here, without the uniform convexity of $F$ with respect to $z$, our new input is that we can control 
 $\|u_\e\|_{L^{\infty}(\Omega_0)}$ by $\|\varphi\|_{L^{\infty}(\Omega)} + \frac{1}{\e} \int_{\Omega\setminus \Omega_0}|u_\e-\varphi|^2 dx$. This follows from Lemma \ref{clem} which is of independent interest.
 \vglue 0.2cm
 
 The rest of the note is organized as follows. In Section \ref{cvx_sec}, we prove a simple but crucial convexity result stated in Lemma \ref{clem}. In Section \ref{pf_sec}, we prove our main results stated in Theorems \ref{mthm} and \ref{ethm}.
\section{A convexity lemma}
\label{cvx_sec}
\begin{lem}
\label{clem}
Let $\Omega_0\subset\subset \Omega_1\subset\subset \Omega_2$ be bounded, convex domains in $\R^n$ ($n\geq 2$). Then there is a positive constant $C= C(n, \Omega_0, \Omega_1, \Omega_2)$ with the following property. If $u$ is a continuous, convex function in $\overline{\Omega_2}$
with $u\leq 0$ on $\p\Omega_2$ then
$$\|u\|_{L^{\infty}(\Omega_1)}\leq C \int_{\Omega_2\setminus \Omega_0} |u| dx.$$
\end{lem}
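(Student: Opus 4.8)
The plan is to exploit the convexity of $u$ together with a covering/barrier argument. First I would reduce to a normalized situation: fix a point $x_0 \in \Omega_1$ where $\abs{u(x_0)}$ is close to $\|u\|_{L^\infty(\Omega_1)}$. Since $u \le 0$ on $\partial\Omega_2$ and $u$ is convex, the maximum principle gives $u \le 0$ on all of $\overline{\Omega_2}$, so it suffices to bound $-u(x_0) = \abs{u(x_0)}$. The heart of the matter is a pointwise lower bound of the form $-u(y) \ge c\,(-u(x_0))$ for all $y$ in a set of definite measure contained in $\Omega_2\setminus\Omega_0$; integrating this over that set yields the claim with $C = 1/(c\,|\text{set}|)$.

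To produce such a set, I would use the convexity of $u$ along segments. Pick $y_0 \in \partial\Omega_2$ (or a point where $u$ is known to be small, e.g. near $\partial\Omega_2$), so $u(y_0) \le 0$; for any $y$ on the segment $[x_0, y_0]$, convexity gives $u(y) \le (1-t)u(x_0) + t\,u(y_0) \le (1-t)u(x_0)$ where $y = (1-t)x_0 + t y_0$, hence $-u(y) \ge (1-t)(-u(x_0))$. Now I extend this idea: because $\Omega_0 \subset\subset \Omega_1 \subset\subset \Omega_2$ are nested convex domains with $x_0 \in \Omega_1$, there is a cone (or a ``corridor'') of directions emanating from $x_0$ such that, traveling a definite distance, one stays inside $\Omega_2$ but exits $\Omega_0$; more precisely, the set
$$
E := \{ (1-t)x_0 + t y : y \in \partial\Omega_2,\ t \in [t_1, t_2] \}
$$
for suitable $0 < t_1 < t_2 < 1$ depending only on $\dist(\Omega_1,\partial\Omega_2)$, $\dist(\Omega_0,\partial\Omega_1)$, and $\text{diam}\,\Omega_2$, has positive Lebesgue measure bounded below by a constant $c_1 = c_1(n,\Omega_0,\Omega_1,\Omega_2)$, lies in $\Omega_2$, and — this is the key geometric point — is disjoint from $\Omega_0$ because any such point is at distance $\ge t_1 \dist(\Omega_0,\partial\Omega_2)$ from $x_0$ in a direction pointing ``outward'' relative to $\Omega_1 \supset \Omega_0$. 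On $E$ we have $-u \ge (1-t_2)(-u(x_0))$, so
$$
\int_{\Omega_2\setminus\Omega_0} \abs{u}\,dx \ge \int_E (-u)\,dx \ge c_1 (1-t_2)\,(-u(x_0)) \ge c_1(1-t_2)\,\bigl(\|u\|_{L^\infty(\Omega_1)} - \delta\bigr),
$$
and letting $\delta \to 0$ finishes the proof with $C = 1/(c_1(1-t_2))$.

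The step I expect to be the main obstacle is making the geometric construction of $E$ uniform: one must check that for \emph{every} $x_0 \in \Omega_1$ there is a fixed-size family of segments from $x_0$ that leave $\Omega_0$ quickly yet remain in $\Omega_2$ for a definite time, and that the resulting set has measure bounded below independently of $x_0$. A clean way to handle this is to cover $\overline{\Omega_1}$ by finitely many balls and, for each ball, fix one reference direction/target region on $\partial\Omega_2$; compactness of $\overline{\Omega_1}$ then turns the per-point estimate into a uniform one. An alternative, perhaps cleaner, route avoiding the cone bookkeeping is to apply the Alexandrov/convexity estimate directly: bound $-u(x_0)$ by the integral of $-u$ over a fixed ball $B \subset \Omega_2 \setminus \Omega_0$ using that $-u$ restricted to a segment through $x_0$ hitting $B$ is concave-from-below in the sense above, then note $B \subset \Omega_2\setminus\Omega_0$ can be chosen once and for all since $\Omega_0 \subset\subset \Omega_2$. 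Either way, no PDE is needed — only elementary convex geometry — and all constants manifestly depend only on $n$ and the three domains.
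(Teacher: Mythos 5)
Your direct convex-geometry argument is correct in substance and genuinely different from the paper's proof. The paper argues by contradiction and compactness: it normalizes $\|u_k\|_{L^{\infty}(\Omega_1)}=1$ with $\int_{\Omega_2\setminus\Omega_0}|u_k|\to 0$, uses the standard $L^{\infty}$--$L^1$ estimate for convex functions nonpositive on $\p\Omega_2$ together with interior gradient bounds to extract a locally uniform limit $u$, observes that $u\equiv 0$ on $\Omega_1\setminus\Omega_0$, and concludes $u\equiv 0$ on $\Omega_1$ by convexity of $u$, contradicting the normalization. Your route buys an explicit constant and, as a bonus, never uses convexity of the domains (only $\Omega_0\subset\subset\Omega_2$ and boundedness), whereas the paper's proof is shorter given the quoted estimates but is non-effective. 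Two points about your write-up. First, your stated reason that $E$ avoids $\Omega_0$ (distance $\geq t_1\dist(\Omega_0,\p\Omega_2)$ from $x_0$ ``in an outward direction'') is not right as phrased: $x_0$ may itself lie in $\Omega_0$, and ``outward relative to $\Omega_1$'' is not well-defined. The correct observation is that a point $(1-t)x_0+ty_0$ with $y_0\in\p\Omega_2$ lies within distance $(1-t)\,\mathrm{diam}\,\Omega_2$ of $\p\Omega_2$, so it cannot belong to $\overline{\Omega_0}$ once $t>1-\dist(\overline{\Omega_0},\p\Omega_2)/\mathrm{diam}\,\Omega_2$; choosing $t_1$ above this threshold and $t_1<t_2<1$ fixes the construction with constants depending only on the domains. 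Second, the uniformity in $x_0$ that you flag as the main obstacle is a non-issue: $E$ is exactly the annular region between the $t_2$- and $t_1$-dilations of $\overline{\Omega_2}$ about $x_0$, whose measure is $(t_2^n-t_1^n)|\Omega_2|$ independently of $x_0$, so no covering argument is needed. Alternatively, your fixed-ball variant works verbatim: for $y\in B\subset\subset\Omega_2\setminus\overline{\Omega_0}$ and $y_0$ the exit point on $\p\Omega_2$ of the ray from $x_0$ through $y$, one has $1-t=|y-y_0|/|x_0-y_0|\geq \dist(B,\p\Omega_2)/\mathrm{diam}\,\Omega_2$, giving $-u(y)\geq c\,(-u(x_0))$ on $B$ uniformly in $x_0$.
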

\begin{proof}[Proof of Lemma \ref{clem}]
Suppose by contradiction that there exists a sequence of  continuous, convex functions $\{u_k\}$ in $\overline{\Omega_2}$
with $u_k\leq 0$ on $\p\Omega_2$ such that 
$$\|u_k\|_{L^{\infty}(\Omega_1)}=1\quad\text{but } \int_{\Omega_2\setminus \Omega_0} |u_k| dx \leq \frac{1}{k}.$$
Thus
$$  \int_{\Omega_2} |u_k| dx = \int_{\Omega_2\setminus \Omega_0} |u_k| dx +  \int_{ \Omega_0} |u_k| dx\leq \frac{1}{k} +|\Omega_0|.$$
Therefore, we have (see, for example, inequality (3.2) in \cite{Le18})
$$\|u_k\|_{L^{\infty}(\Omega_2)} \leq \frac{n+1}{|\Omega_2|}\int_{\Omega_2} |u_k| dx \leq C_1 (n,\Omega_0,\Omega_2).$$
From  $u_k\leq 0$ on $\p\Omega_2$ and the gradient bound for each $x\in\Omega_2$ (see, for example, (3.1) in \cite{Le18})
$$|Du_k(x)|\leq \frac{\max_{\p\Omega_2} u_k - u_k(x)}{\dist(x,\p\Omega_2)}\leq \frac{\|u_k\|_{L^{\infty}(\Omega_2)}}{\dist(x,\p\Omega_2)},$$
we find that, after extracting a subsequence, $\{u_k\}$ converges locally uniformly in $\Omega_2$ to a convex function $u$ in $\overline{\Omega_2}$ with $u\leq 0$ on $\p\Omega_2$. 
Hence
$\|u\|_{L^{\infty}(\Omega_1)}=1$. Moreover, from $$ \int_{\Omega_1\setminus \Omega_0} |u_k| dx \leq \frac{1}{k},$$ we find that $u\equiv 0$ in $\Omega_1\setminus \Omega_0$. 
By the convexity of $u$, we have $u\equiv 0$ in $\Omega_1$.  This contradicts $\|u\|_{L^{\infty}(\Omega_1)}=1$ and hence, the lemma is proved.
\end{proof}
\begin{cor}
Let $\Omega_0\subset\subset\subset \Omega$ be bounded, convex domains in $\R^n$ ($n\geq 2$). If $u$ is a continuous, convex function in $\overline{\Omega}$ then
\begin{equation}
\label{clem4}
\|u\|_{L^{\infty}(\Omega)} \leq C_1 (n,\Omega_0,\Omega,  \max_{\p\Omega} u) + C_2(n,\Omega_0,\Omega) \int_{\Omega\setminus\Omega_0}|u| dx.
\end{equation}
\end{cor}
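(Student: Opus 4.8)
The plan is to reduce the statement to Lemma~\ref{clem} by a harmless vertical translation, and then to upgrade the resulting bound on an intermediate domain to a bound on all of $\Omega$ using convexity. Set $M:=\max_{\p\Omega}u$ and $v:=u-M$. Since $u$ is convex and $\Omega$ is a bounded convex domain, $u$ attains its maximum over $\overline{\Omega}$ on $\p\Omega$ (along any chord a convex function lies below the larger of its two endpoint values), so in fact $v\leq 0$ throughout $\overline{\Omega}$, and in particular $v\leq 0$ on $\p\Omega$. The function $v$ is convex and continuous on $\overline{\Omega}$, hence an admissible competitor for Lemma~\ref{clem}.

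First I would fix, once and for all, a bounded convex domain $\Omega_1$ with $\Omega_0\subset\subset\Omega_1\subset\subset\Omega$; such a domain exists and depends only on $\Omega_0$ and $\Omega$ (for instance a small neighbourhood of $\Omega_0$). Applying Lemma~\ref{clem} to $v$ with the nested triple $\Omega_0\subset\subset\Omega_1\subset\subset\Omega$ gives
\begin{equation*}
\|v\|_{L^{\infty}(\Omega_1)}\leq C(n,\Omega_0,\Omega)\int_{\Omega\setminus\Omega_0}|v|\,dx .
\end{equation*}
Next, since $v$ is convex with $v\leq 0$ on $\p\Omega$, the elementary $L^{1}$-to-$L^{\infty}$ bound used in the proof of Lemma~\ref{clem} (inequality (3.2) in \cite{Le18}) yields $\|v\|_{L^{\infty}(\Omega)}\leq \frac{n+1}{|\Omega|}\int_{\Omega}|v|\,dx$. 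Splitting $\int_{\Omega}|v|=\int_{\Omega\setminus\Omega_0}|v|+\int_{\Omega_0}|v|$ and bounding $\int_{\Omega_0}|v|\leq |\Omega_0|\,\|v\|_{L^{\infty}(\Omega_1)}$ by the previous display, I obtain $\|v\|_{L^{\infty}(\Omega)}\leq C(n,\Omega_0,\Omega)\int_{\Omega\setminus\Omega_0}|v|\,dx$. (Alternatively, one can pass from $\Omega_1$ to $\Omega$ by estimating a subgradient of $v$ at a fixed point of $\Omega_0$ via the interior gradient bound quoted in the proof of Lemma~\ref{clem}, together with $v\leq 0$ on $\p\Omega$.)

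Finally I would undo the translation: from $|v|\leq |u|+|M|$ we get $\int_{\Omega\setminus\Omega_0}|v|\leq \int_{\Omega\setminus\Omega_0}|u|+|M|\,|\Omega|$, and from $u=v+M$ we get $\|u\|_{L^{\infty}(\Omega)}\leq \|v\|_{L^{\infty}(\Omega)}+|M|$. Combining these with the bound on $\|v\|_{L^{\infty}(\Omega)}$ yields \eqref{clem4} with $C_2=C(n,\Omega_0,\Omega)$ and $C_1=\big(C(n,\Omega_0,\Omega)\,|\Omega|+1\big)\,|\max_{\p\Omega}u|$. There is no genuine obstacle here: the corollary is essentially a bookkeeping consequence of Lemma~\ref{clem}, and the only two points that need a moment's attention are that a convex function on a convex domain attains its maximum on the boundary (so that $v$ is nonpositive on all of $\overline{\Omega}$, not merely on $\p\Omega$) and the passage from the interior domain $\Omega_1$ to the whole of $\Omega$.
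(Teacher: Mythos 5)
Your proof is correct and follows essentially the same route as the paper: translate by $\max_{\p\Omega}u$, apply Lemma~\ref{clem} to the translated function, combine with the $L^{1}$-to-$L^{\infty}$ bound (3.2) of \cite{Le18} on $\Omega$, and split the integral over $\Omega_0$ and $\Omega\setminus\Omega_0$. The only differences are bookkeeping: you make the intermediate convex domain $\Omega_1$ explicit (which the paper leaves implicit) and you undo the translation at the end rather than at the start.
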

\begin{proof}
Applying (3.2) in \cite{Le18} to $u-\max_{\p\Omega} u$, we get
\begin{equation}
\label{clem3}
\|u\|_{L^{\infty}(\Omega)}\leq C(n,\Omega)\int_{\Omega}|u| dx + C(n,\Omega, \max_{\p\Omega} u).
\end{equation}
Applying Lemma \ref{clem} to $u-\max_{\p\Omega} u$, we find
\begin{eqnarray*}\|u\|_{L^{\infty}(\Omega_0)} \leq \|u-\max_{\p\Omega} u\|_{L^{\infty}(\Omega_0)}+ |\max_{\p\Omega} u|&\leq& C(n,\Omega_0,\Omega) \int_{\Omega\setminus\Omega_0}|u-\max_{\p\Omega} u| dx + |\max_{\p\Omega} u|
\\ &\leq & C(n,\Omega_0,\Omega) \int_{\Omega\setminus\Omega_0}|u| dx+   C|\max_{\p\Omega} u|.
\end{eqnarray*}
It follows that
$$\int_{\Omega}|u| dx\leq |\Omega_0| \|u\|_{L^{\infty}(\Omega_0)} +  \int_{\Omega\setminus\Omega_0}|u| dx\leq C(n,\Omega_0,\Omega) \int_{\Omega\setminus\Omega_0}|u| dx+   C(n,\Omega_0,\Omega)|\max_{\p\Omega} u|$$
and therefore (\ref{clem4}) follows from (\ref{clem3}).
\end{proof}
\section{Proof of the main results }
\label{pf_sec}
In this section, we prove Theorems \ref{mthm} and \ref{ethm}.
\begin{proof}[Proof of Theorem \ref{mthm}] We divide the proof into several steps.\\
{\it Step 1: A priori estimates.}
In this step, we establish the a priori $L^{\infty}(\Omega)$ estimates for uniformly convex solutions $u_{\e}\in W^{4, q}(\Omega)$ $(q>n)$ to the system (\ref{Abe})-(\ref{fe}).  Recall that $\varphi\in W^{4,q}(\Omega)$ is convex. We only consider
$$0<\e<1.$$
\noindent
For $t>0$, let
 \begin{equation*}
 G(t)= \left\{ 
  \begin{alignedat}{1} \frac{t^{\theta}-1}{\theta}~&\text{if} ~\theta \in (0, 1/n), \\\
 \log t~&\text{if}~ \theta=0.
\end{alignedat}
\right.
\end{equation*}
Then $G'(t)= t^{\theta-1}$ for all $t>0$ and $w_\e= G^{'}(\det D^2 u_\e)$ in $\Omega$.\\

 In what follows, we use $C, C_0, C_1, C_2, \cdots,$ etc, to denote positive constants depending only on $n$, $q$, $\Omega_0,\Omega$, $\theta$, $C_\ast$, $\inf_{\p \Omega}\psi$, and $\| \varphi\|_{W^{4, q}(\Omega)}$. They are called universal constants and their values may change from line to line. However, they do not depend on $\e>0$. 
 {\it When such constants depend on $\e$, they will be indicated explicitly, as for $C_d(\varphi,\e)$ below.}

Recall from (\ref{rhoeq}) that $\rho$ is a strictly convex defining function of $\Omega$. Then, there is $\gamma>0$ depending only on $\Omega$ such that
$$D^2 \rho\geq \gamma I_n~ \text{and} ~\rho\geq -\gamma^{-1} ~\text{in}~ \Omega.$$
Recall that the constant $C_\varphi$ is defined by (\ref{Cvar}). For simplicity, let us denote
\begin{equation}
\label{tildeu}
\tilde u =\varphi + C_{\varphi}\e^{\frac{1}{3n^2}} (e^{\rho}-1).
\end{equation}
From the convexity of $\varphi$ and 
$$D^2 (e^{\rho}-1)=e^{\rho}(D^2\rho + D\rho \otimes D\rho)\geq e^{-\gamma^{-1}}\gamma I_n,$$
we find that the  function $\tilde u$
is uniformly convex, belongs to $ W^{4,q}(\Omega)$ and satisfies:
 \begin{myindentpar}{1cm}
  (a) $\tilde u=\varphi$ on $\p\Omega$,\\
  (b) $
\| \tilde{u} \|_{C^{3}(\ov{\Omega})} + 
\|\tilde u\|_{W^{4, q}(\Omega)} \leq C,\quad \textrm{and } \det D^2\tilde{u} \ge C^{-1}C_{d}(\varphi, \e)>0,
$\\
(c) letting $\tilde{w}=G'(\det D^2 \tilde{u})$, and denoting by $(\tilde{U}^{ij})$ the 
cofactor matrix of $(\tilde{u}_{ij})$, then  $$\|\tilde w\|_{L^{\infty}(\Omega)}\leq C[C_d(\varphi,\e)]^{-1};~\left\|\tilde U^{ij}\tilde w_{ij}\right\|_{L^1(\Omega)}\leq C [C_d(\varphi,\e)]^{-3}.$$
\end{myindentpar}
Here, from the definition of $\tilde u$ in (\ref{tildeu}), we have the following estimate for the magnitude of $\det D^2 \tilde u$ in terms of $\e$:
\begin{equation}
\label{Cd}
C_{d}(\varphi,\e) \\= \left\{\begin{array}{rl}
 \min_{\overline{\Omega}} \det D^2\varphi&  \text{if }\varphi \text{ is uniformly convex in } \overline{\Omega},\\
\e^{\frac{1}{3n}} & \text{otherwise}.
\end{array}\right.
\end{equation}
Note that (c) follows from (b) and the following formula (see also \cite[Lemma 2.1]{Le_JDE}):
$$\tilde w_{ij}= G^{'''} (\det D^2 \tilde{u}) \tilde U^{kl} \tilde U^{rs} \tilde u_{kli}\tilde u_{rsj}+ G^{''} (\det D^2 \tilde{u}) \tilde U^{kl} \tilde u_{klij}+ G^{''} (\det D^2 \tilde{u})\tilde U^{kl}_j \tilde u_{kli}.$$
 
We use $\nu= (\nu_{1},\cdots,\nu_n)$ to denote the unit outer normal vector field on $\p \Omega$ and $\nu_0$ on $\p\Omega_0$. \\
\noindent
 First, from (4.5) in \cite{Le18}, we have
 \begin{equation}
 \label{tildew}
 \int_{\partial \Omega} (\psi U_\e^{ij}-\tilde w\tilde U^{ij}) ((u_\e)_j-\tilde{u}_j ) \nu_i dS+ \int_{\Omega} U^{ij}_\e (w_{\e})_{ij}(u_\e-\tilde{u})dx+ \int_{\Omega} \tilde U^{ij} \tilde w_{ij} (\tilde{u}-u_\e) dx\leq 0.
 \end{equation}
 For readers' convenience, we include the derivation of (\ref{tildew}) which relies on some concavity arguments. Indeed, from the definition of $G$ and $\theta\in [0,\frac{1}{n})$, we find that the function $\tilde G(t):= G(t^n)$ is strictly concave on$(0,\infty)$.
 Using this together with $G'>0$, and the concavity of the map $M\longmapsto (\det M)^{1/n}$ in the space of symmetric matrices $M\geq 0$, we obtain
\begin{multline*}
 \tilde G((\det D^2 \tilde u)^{1/n}) -\tilde G((\det D^2  u_\e)^{1/n})\\ \leq \tilde G^{'}((\det D^2 u_\e)^{1/n})((\det D^2 \tilde u)^{1/n}-(\det D^2 u_\e)^{1/n})\\
 \leq  \tilde G^{'}((\det D^2 u_\e)^{1/n})\frac{1}{n} (\det D^2 u_\e)^{1/n-1} U_\e^{ij} (\tilde u-u_\e)_{ij} .
\end{multline*}
Since $\tilde G^{'}((\det D^2 u_\e)^{1/n}) = n G^{'}(\det D^2 u_\e) (\det D^2 u_\e)^{\frac{n-1}{n}}= n w_\e  (\det D^2 u_\e)^{\frac{n-1}{n}}$, we rewrite the above inequalities as
\begin{equation*}
G(\det D^2 \tilde u)- G(\det D^2 u_\e)\leq w_\e U_\e^{ij}(\tilde u- u_\e)_{ij}.
\end{equation*}
Similarly, we have
\begin{equation*}
G(\det D^2 u_\e)- G(\det D^2 \tilde u)\leq \tilde w \tilde U^{ij}(u_\e- \tilde u)_{ij}.
\end{equation*}
Adding these two last inequalities, integrating by parts twice and using the fact that $(U_\e^{ij})$ is divergence free, we obtain
\begin{eqnarray*}
0&\geq&  \int_{\Omega} \left[w_\e U_\e^{ij}(u_\e-\tilde u)_{ij} + \tilde w \tilde U^{ij}(\tilde u-u_\e)_{ij}\right]dx
\nonumber \\
&=&
  \int_{\partial \Omega} w_\e U_\e^{ij} ((u_\e)_j-\tilde{u}_j ) \nu_i dS + \int_{\Omega} U_\e^{ij} (w_\e)_{ij}(u_\e-\tilde{u}) dx  \\ &+&
 \int_{\partial \Omega} \tilde w \tilde{U}^{ij} ( \tilde{u}_j-(u_\e)_j) \nu_i dS + \int_{\Omega} \tilde U^{ij} \tilde w_{ij} (\tilde{u}-u_\e) dx,
  \end{eqnarray*}
  from which (\ref{tildew})  follows. Here we recall that $w_\e=\psi$ on $\Omega$.
 
 In what follows, we will use $f_\e$ to denote $f_\e(\cdot, u_\e, Du_\e, D^2 u_\e; \tilde u)$. Then, by (\ref{Abe}),
 $$U^{ij}_\e (w_{\e})_{ij} =\e^{-1} f_\e.$$
 Let $K(y)$ be the Gauss curvature of $\partial  \Omega$ at
$y\in\p\Omega$. We have the following assertion.\\
{\bf Assertion.}  For all $\e>0$, we have
\begin{equation*}
 \int_{\partial \Omega} K \psi (u_\e)_{\nu}^n dS \leq  C[C_d(\varphi,\e)]^{-3} + C[C_d(\varphi,\e)]^{-3}\left(
\int_{\p \Omega}  ((u_\e)_{\nu}^+)^n dS\right)^{(n-1)/n} +  \int_{\Omega} -\e^{-1}f_\e (u_\e-\tilde u)dx.
\end{equation*}
The proof of the assertion is similar to that of (4.10) in \cite{Le18}, and we include it here for readers' convenience.

We start by analyzing the boundary terms in (\ref{tildew}). Since $u_\e-\tilde u=0$ on $\p \Omega$, we have $(u_\e-\tilde u)_j= (u_\e-\tilde u)_{\nu} \nu_j$, and hence
$$U_\e^{ij}(u_\e-\tilde u)_j \nu_i= U_\e^{ij}\nu_j \nu_i (u_\e-\tilde u)_{\nu} = U_\e^{\nu\nu}(u_\e-\tilde u)_{\nu}$$
where
$$U_\e^{\nu\nu} = \det D^2_{x'} u_\e$$
with $x'\perp \nu$ denoting the tangential directions along $\p \Omega$. Therefore, 
\begin{equation*} 
(\psi U_\e^{ij}-\tilde w\tilde U^{ij}) ((u_\e)_j - \tilde{u}_j) \nu_i = (\psi U_\e^{\nu\nu} -\tilde w\tilde U^{\nu\nu})((u_\e)_{\nu} - \tilde{u}_{\nu}).
\end{equation*}
Now, using $U^{\nu\nu}_\e$ and $\tilde U^{\nu\nu}$, we can rewrite (\ref{tildew}) as
$$\int_{\p\Omega}  (\psi U_\e^{\nu\nu} -\tilde w\tilde U^{\nu\nu})((u_\e)_{\nu} - \tilde{u}_{\nu}) dS +  \int_{\Omega} \tilde U^{ij} \tilde w_{ij} (\tilde{u}-u_\e) dx \leq  \int_{\Omega} -\e^{-1}f_\e (u_\e-\tilde u)dx$$
which gives
\begin{multline}
\label{KUep}
\int_{\p\Omega} \psi U_\e^{\nu\nu} (u_\e)_{\nu} dS \leq  \|\psi \tilde{u}_{\nu}\|_{L^{\infty}(\Omega)} \int_{\p\Omega}  |U_\e^{\nu\nu} |dS +  \|\tilde U^{\nu\nu} \tilde{u}_{\nu}\|_{L^{\infty}(\Omega)}   \|\tilde w\|_{L^{\infty}(\Omega)} 
\\ +  \|\tilde U^{\nu\nu} \|_{L^{\infty}(\Omega)}   \|\tilde w\|_{L^{\infty}(\Omega)} \int_{\p\Omega}|(u_\e)_\nu| dS
 +  \|\tilde U^{ij}\tilde w_{ij}\|_{L^1(\Omega)}\left(\|\tilde u\|_{L^{\infty}(\Omega)} + \| u_\e\|_{L^{\infty}(\Omega)}\right)\\+ \int_{\Omega} -\e^{-1}f_\e (u_\e-\tilde u)dx.
\end{multline}
Observe that:
\begin{myindentpar}{1cm}
(A) By (a)-(b), the quantities $\tilde u$, $\tilde{u}_{\nu}$, and $\tilde{U}^{\nu\nu}=\tilde U^{ij}\nu_i\nu_j$ are universally bounded. By (c), 
 $$\|\tilde w\|_{L^{\infty}(\Omega)} \leq C[C_d(\varphi,\e)]^{-1}; \|\tilde U^{ij}\tilde w_{ij}\|_{L^1(\Omega)} \leq C[C_d(\varphi,\e)]^{-3}.$$
 (B) For the convex function $u_\e\in C^2(\overline{\Omega})$ with $u_\e=\varphi$ on $\p\Omega$, we have (see, for example \cite[inequality (2.7)]{Le_JDE})
\begin{equation*}\|u_\e\|_{L^{\infty}(\Omega)} \leq C(n,\varphi,\Omega) + C(n,\Omega)\left(
\int_{\p \Omega}  ((u_\e)_{\nu}^+)^n dS\right)^{1/n}.
\end{equation*}
(C) The Gauss curvature $K$ of $\p\Omega$ and $U^{\nu\nu}_\e$ are related by (see, for example, (4.9) in \cite{Le18})
\begin{equation*} 
U^{\nu\nu}_\e = K ((u_\e)_\nu)^{n-1} + E_\e\quad\text{where } |E_\e| \leq C(1 + ((u_\e)^{+}_\nu)^{n-2}).
\end{equation*}
 \end{myindentpar}
Now, the Assertion follows from (\ref{KUep}) together with the above observations. 

Let us continue with the proof of {\it Step 1}. Since $u_\e$ is convex with boundary value $\varphi$ on $\p\Omega$, we have 
 \begin{equation*}
 (u_\e)_\nu\geq -\|D\varphi\|_{L^{\infty}(\Omega)}:=-C_0.
 \end{equation*}
It follows that, for $u_{\nu}^+ =\max (0, u_{\nu})$, we have 
$((u_\e)_\nu^{+})^n \leq (u_\e)_{\nu}^n + C_0^n$
and therefore from the Assertion, we obtain
\begin{multline}
\label{fe_u}
 \int_{\partial \Omega} K \psi ((u_\e)_{\nu}^{+})^n dS \leq  C[C_d(\varphi,\e)]^{-3} + C[C_d(\varphi,\e)]^{-3}\left(
\int_{\p \Omega}  ((u_\e)_{\nu}^+)^n dS \right)^{(n-1)/n}\\ +  \int_{\Omega} -\e^{-1}f_\e (u_\e-\tilde u)dx.
\end{multline}
By the uniform convexity of $\p\Omega$, we have $K\geq C(\Omega)>0$ on $\p\Omega$. Using this, together with $\inf_{\p \Omega}\psi>0$ and Young's inequality for the second term on the right hand side of (\ref{fe_u}), we find that
$$C[C_d(\varphi,\e)]^{-3}\left(
\int_{\p \Omega}  ((u_\e)_{\nu}^+)^n dS \right)^{(n-1)/n} \leq C[C_d(\varphi,\e)]^{-3n} +\frac{1}{2} \int_{\partial \Omega} K \psi ((u_\e)_{\nu}^{+})^n dS$$
and hence
\begin{equation*}
 \int_{\partial \Omega}  ((u_\e)_{\nu}^{+})^n dS \leq  C[C_d(\varphi,\e)]^{-3n}  +  \int_{\Omega} -\e^{-1}f_\e (u_\e-\tilde u)dx.
\end{equation*}
Therefore, multiplying both sides of the above inequality by $\e>0$ and using $\e [C_d(\varphi,\e)]^{-3n}\leq C$ from (\ref{Cd}), we get, as in  inequality (4.31) in \cite{Le18} (which was stated for $n=2$ there)
\begin{equation}
\label{fe_u2}
\int_{\partial \Omega} \e((u_\e)_{\nu}^{+})^n dS \leq  C\e [C_d(\varphi,\e)]^{-3n} +  \int_{\Omega} -f_\e (u_\e-\tilde u)dx\leq C+  \int_{\Omega} -f_\e (u_\e-\tilde u)dx.
\end{equation}
We will  estimate the right hand side of (\ref{fe_u2}). \\
From the convexity of $F^0$ (see (\ref{F0})), we can estimate
\begin{equation}
\label{Ae}
A_\e:=\int_{\Omega_0}- f^0(x, u_\e(x)) (u_\e-\tilde u) dx\leq \int_{\Omega_0}- f^0(x, \tilde u(x)) (u_\e-\tilde u) dx\leq  C +  C_1\|u_\e\|_{L^{\infty}(\Omega_0)}.
\end{equation}
In what follows, we will frequently use the following inequality (see, (3.1) in \cite{Le18})
 \begin{equation}
 \label{Due}
 |Du_\e(x)|\leq \frac{\max_{\p\Omega} u_\e - u_\e(x)}{\dist(x,\p\Omega)}~\forall x\in\Omega.
 \end{equation}
 By the convexity of $u_\e$ and $F^1(x, p)$ in $p$, we have $F^{1}_{p_i p_j} (u_\e)_{ij}\geq 0$. Moreover, $u_\e\leq \sup_{\p\Omega}\varphi\leq C$ and $|\tilde u|\leq C$. Thus, recalling (\ref{F1}), we find that
 \begin{equation*}
F^{1}_{p_i p_j} (u_\e)_{ij} (u_\e-\tilde u) \leq C F^{1}_{p_i p_j} (u_\e)_{ij} \leq C C_\ast \Delta u_\e.
 \end{equation*}
 By the divergence theorem and (\ref{Due}), we have
 \begin{equation}
  \label{F1_est1}
  \int_{\Omega_0} F^{1}_{p_i p_j} (u_\e)_{ij} (u_\e-\tilde u) dx \leq C C_\ast \int_{\Omega_0}  \Delta u_\e dx = CC_\ast \int_{\p \Omega_0} (u_\e)_{\nu_0} dS \leq C + C_2 \|u_\e\|_{L^{\infty}(\Omega_0)}.
 \end{equation}
 
 On the other hand, for any $i=1,\cdots, n$, using (\ref{F1}) and (\ref{Due}), we can estimate in $\Omega_0$:
 \begin{eqnarray}
 \label{F1_est2}
 |F^1_{p_i x_i} (x, Du_\e(x)) (u_\e(x)-\tilde u(x) )| &\leq& C_\ast ( |Du_\e(x)|+ 1)(|u_\e(x) + C) 
\nonumber\\ &\leq& C_3(|u_\e(x)|^2 + 1).
\end{eqnarray}
Note that (\ref{clem4}) together with $\|\tilde u\|_{L^{\infty}(\Omega)}\leq C$ gives
\begin{equation}
\label{clem2}
\|u_\e\|_{L^{\infty}(\Omega_0)}\leq C +  C\int_{\Omega\setminus\Omega_0}|u_\e| dx\leq C +   C_4\int_{\Omega\setminus\Omega_0}|u_\e-\tilde u|^2 dx. 
\end{equation}
From (\ref{Ae}), (\ref{F1_est1}), (\ref{F1_est2}) and (\ref{clem2}), we find that
\begin{eqnarray}
\label{bounde1}
\int_{\Omega_0}- f_{\e} (u_\e-\tilde u) dx &=& \int_{\Omega_0}\left[- f^0(x, u_\e(x)) + \frac{\p}{\p x_i} \left(\frac{\p F^1}{\p p_i}(x, Du_\e(x))\right)\right] (u_\e-\tilde u)dx\nonumber\\
&=& A_\e + \int_{\Omega_0} \left [F^1_{p_i x_i} (x, Du_\e(x)) + F^1_{p_i p_j}(x, Du_\e(x)) (u_\e)_{ij}\right] (u_\e-\tilde u) dx\nonumber\\
&\leq & A_\e + C_3\int_{\Omega_0}  (|u_\e|^2 + 1) dx+ C + C_2 \|u_\e\|_{L^{\infty}(\Omega_0)} \nonumber\\
&\leq& C_5\|u\|^2_{L^{\infty}(\Omega_0)} +C
\leq C + C_6 \int_{\Omega\setminus\Omega_0}|u_\e-\tilde u|^2 dx.
\end{eqnarray}
It follows from (\ref{fe_u2}) and (\ref{clem2}), and $f_\e=\frac{1}{\e} (u_\e-\tilde u)$ on $\Omega\setminus\Omega_0$ that
\begin{eqnarray} 
\label{ep_1}
\int_{\partial \Omega} \e  ((u_\e)^{+}_{\nu})^n dS& \le& C  +  \int_{\Omega} -f_{\e} (u_\e-\tilde u ) dx\nonumber \\&=& C + \int_{\Omega_0}- f_{\e} (u_\e-\tilde u ) dx+ \int_{\Omega\setminus\Omega_0} -f_{\e} (u_\e-\tilde u) dx\nonumber\\
 &\leq& C + C_6 \int_{\Omega\setminus\Omega_0}|u_\e-\tilde u|^2 dx   +  \int_{\Omega\setminus\Omega_0} -\frac{1}{\e}|u_\e-\tilde u|^2 dx\nonumber
 \\&\leq& C  -  \int_{\Omega\setminus\Omega_0} \frac{1}{2\e}|u_\e-\tilde u|^2 dx
\end{eqnarray} 
if $\e$ is small, say $$\e\leq \frac{1}{2C_6}.$$  {\it From now on,  we assume that $\e$ is small.} Then, 
we get
 \begin{equation}
 \label{epn}
 \int_{\partial \Omega} \e^2  ((u_\e)^{+}_{\nu})^n  dS+ \int_{\Omega\setminus\Omega_0} (u_\e-\tilde u)^2 dx  \leq C\e.
 \end{equation}
This together with (\ref{clem4}) and $\|\tilde u\|_{L^{\infty}(\Omega)}\leq C$ gives the uniform bound for $u_\e$ on $\Omega$:
\begin{equation}
\label{ueb}
\|u_\e\|_{L^{\infty}(\Omega)} \leq C_7.
\end{equation}
{\it Step 2: Existence and convergence properties of uniformly convex solutions to (\ref{Abe})-(\ref{fe}).}\\
(i) We consider two separate cases.\\
{\it Case 1:  $F(x, z, p)= F^0(x, z)$}. In this case, from the a priori $L^{\infty}(\Omega)$ estimates (\ref{ueb}) for uniformly convex solutions $u_{\e}\in W^{4, q}(\Omega)$ $(q>n)$ to the system (\ref{Abe})-(\ref{fe}), 
we can use a Leray-Schauder degree argument as in \cite[Theorem 4.2]{CR} to show the existence of a unique uniformly convex solution  $u_{\e}\in W^{4, q}(\Omega)$ (for all $q<\infty$) to the system (\ref{Abe})-(\ref{fe}).\\
\noindent
{\it Case 2:  $F(x, z, p)= F^0(x, z)+ F^1 (x, p)$ and $n=2$}. In this case, from the a priori $L^{\infty}(\Omega)$ estimates (\ref{ueb}) for uniformly convex solutions $u_{\e}\in W^{4, q}(\Omega)$ $(q>n)$ to the system (\ref{Abe})-(\ref{fe}), 
we can establish the a priori $W^{4, q}(\Omega)$ estimates for $u_\e$ as in \cite[Theorem 4.1]{Le18}. With these a priori estimates,
we can use a Leray-Schauder degree argument as in \cite[Theorem 2.1]{Le18} to show the existence of a uniformly convex solution  $u_{\e}\in W^{4, q}(\Omega)$ (for all $q<\infty$)  to the system (\ref{Abe})-(\ref{fe}).\\
 Hence (i) is proved. \\
 (ii) For $\e>0$ small, let $u_\e\in W^{4, q}(\Omega)$ $(q>n)$ be a solution to (\ref{Abe})-(\ref{fe}).
 By (\ref{ueb}), the sequence $\{u_\e\}$ is uniformly bounded with respect to $\e$. By (\ref{Due}), $|Du_\e|$ is uniformly bounded on compact subsets of $\Omega$. Thus, by the Arzela--Ascoli  theorem, up to extraction of a subsequence, 
  $u_\e$ converges uniformly on compact subsets of $\Omega$, and also in $W^{1,2}(\Omega_0)$, to a convex function $u$ on $\Omega$. From (\ref{epn}) and the fact that $\lim_{\e\rightarrow 0} \tilde u=\varphi$, we find $u\in \bar S[\varphi,\Omega_0]$.  Let 
 \begin{equation}
 \label{eta_def}
 \eta_\e:= \e^{1/n} \left(\int_{\partial \Omega}   ((u_\e)^{+}_{\nu})^n dS\right)^{1/n}.
 \end{equation}
 Then, from (\ref{epn}), we have as in \cite[inequality (5.5)]{CR} and \cite[inequality (4.27)]{Le18}
 \begin{equation}
 \label{etae}
 \eta_\e \leq C.
 \end{equation}
 Consider the functional $J$ defined over $\bar S[\varphi, \Omega_0]$ by 
\begin{equation}
\label{J_fn}
J(v):=\int_{\Omega_0} F(x, v(x), Dv(x)) dx.
\end{equation}
Since $u_\e$ converges uniformly to $u$ on $\overline{\Omega_0}$, by Fatou's lemma, we have
\begin{equation*}
\liminf_{\e}\int_{\Omega_0}F^0(x, u_\e(x)) dx\geq \int_{\Omega_0} F^0(x, u(x)) dx.
\end{equation*}
 From the convexity of $F^1(x, p)$ in $p$ and the fact that $u_\e$ converges to $u$ on $W^{1,2}(\Omega_0)$, we have
 \begin{equation*}
 \liminf_{\e}\int_{\Omega_0}F^1(x, Du_\e(x)) dx\geq \int_{\Omega_0} F^1(x, Du(x)) dx,
 \end{equation*}
 which is due to lower semicontinuity.
 Therefore
 \begin{equation}
 \label{ueu_est}
 \liminf_{\e}J(u_\e)\geq J(u).
 \end{equation}
Our main estimate is the following.\\
 {\bf Claim.} If $0\leq\theta<1/n$, then for any $v\in \bar S[\varphi, \Omega_0]$, we have
\begin{equation}
\label{vue0}
J(v)\geq  \liminf_{\e}J(u_\e)- \limsup_{\e} \left[\e^{(n-1)/n}\eta_\e +  \e^{\frac{1}{n}}\eta^{n-1}_\e\right].
\end{equation}
Assuming the above claim, we show that $u$ is a minimizer of (\ref{p1}). Indeed, this follows from (\ref{vue0}), (\ref{ueu_est}) and 
(\ref{etae}) which imply the estimate
$
J(v)\geq J(u)
$
for all $v\in \bar S[\varphi, \Omega_0]$.
 
It remains to prove the claim.  The proof is similar to that of  \cite[Theorem 2.3]{Le18} where the case $n=2$ was treated. In our context of Theorem \ref{mthm} (ii), we would like to treat also the case of general dimensions $n$ when $F^1\equiv 0$, that is, when the Lagrangian is independent of the gradient variables.
For reader's convenience, we repeat the arguments there. Recall from (\ref{tildeu}) that 
$$\tilde u=\varphi + C_{\varphi}\e^{\frac{1}{3n^2}} (e^{\rho}-1).$$
Consider the following functional $J_\e$ over the set of convex functions $v$ on $\overline{\Omega}$:
\begin{equation}
\label{Je_def}
J_{\e}(v)=\int_{\Omega_0}  F(x, v(x), Dv(x))dx+\frac{1}{2\e}\int_{\Omega\setminus\Omega_0} (v-\tilde u)^2 dx-\e\int_{\Omega} G( \det D^2 v) dx.
\end{equation}
From the Alexandrov theorem (\cite[Theorem 1, p.242]{EG}), $v$ is twice differentiable a.e.  At those points of twice differentiability of $v$, we use $D^2 v$ to denote
its Hessian matrix. Thus, in addition to setting $\log 0=-\infty$, the functional $J_\e$ is well defined with this convention for all $\theta\geq 0$; it can take value $\infty$.

Let $U^{\nu\nu}_\e=U^{ij}_\e \nu_i\nu_j$.  Let $K$ be the Gauss curvature of $\p\Omega$. Then, we have (see, for example, (4.9) in \cite{Le18})
\begin{equation} \label{KE}
U^{\nu\nu}_\e = K ((u_\e)_\nu)^{n-1} + E_\e\quad\text{where } |E_\e| \leq C(1 + ((u_\e)^{+}_\nu)^{n-2}).
\end{equation}
First, by \cite[estimate (5.6)]{Le18}, if $v$ is a convex function in $\overline{\Omega}$ with $v=\tilde u$ in a neighborhood of $\p\Omega$,  then
\begin{equation}
\label{Je}
J_\e(v)-J_\e(u_\e)\geq   \e \int_{\p\Omega} \psi U_{\e}^{\nu\nu} \p_{\nu}(u_\e-\tilde u) dS+  \int_{\p\Omega_0}(v-u_\e)\nabla_p F^1(x, Du_\e(x)) \cdot \nu_0 dS.
\end{equation} 
Now, we are ready to prove (\ref{vue0}) for all $v\in\bar S[\varphi,\Omega_0]$. Indeed, applying (\ref{Je}) to
$$v_\e: = v + C_{\varphi}\e^{\frac{1}{3n^2}}(e^\rho-1),$$
which clearly satisfies $v_\e=\tilde u$ on $\overline{\Omega}\setminus \Omega_0$,
and using the fact that the subsequential uniform limit $u\in \bar S[\varphi, \Omega_0]$ of $u_\e$ satisfies $u=v=\varphi$ on $\p\Omega_0$, we   conclude that
the corresponding rightmost term in  (\ref{Je}) $$ \int_{\p\Omega_0}(v_\e-u_\e)\nabla_p F^1(x, Du_\e(x)) \cdot \nu_0 dS\rightarrow 0~\text{as }\e\rightarrow 0,$$  and hence, 
\begin{multline}
\label{J_comp}
 \e \int_{\p\Omega} \psi U_{\e}^{\nu\nu} \p_{\nu}(u_\e-\tilde u)dS-o_{\e}(1)\leq J_\e(v_\e)-J_\e(u_\e) \\
=J(v_\e)- J(u_\e) -\frac{1}{2\e}\int_{\Omega\setminus\Omega_0} (u_\e-\tilde u)^2 dx \\-\e\int_{\Omega} [G(\det D^2 v_\e)- G(\det D^2 u_\e)] dx .
\end{multline}
Here we use $o_{\e}(1)$ to denote a quantity that tends to $0$ when $\e\rightarrow 0$.\\
\noindent
Since $\det D^2 v_\e \geq C\e^{\frac{1}{3n}}$, 
we have
\begin{equation}
\label{Gpos}
\liminf_{\e} \e\int_{\Omega} G(\det D^2 v_\e)dx\geq 0~\text{for all } v\in \bar S[\varphi,\Omega_0].
\end{equation}
From the definition of $v_\e$ and the dominated convergence theorem, we have $$\lim_{\e \rightarrow 0} J(v_\e)=J(v).$$ Combining this with  (\ref{J_comp}) and (\ref{Gpos}), we get
\begin{multline}
\label{Jv}
J(v)\geq \liminf_{\e} J(u_\e) + \liminf_{\e} \e\int_{\Omega} [G(\det D^2 v_\e)- G(\det D^2 u_\e)] dx\\+ \liminf_{\e}\e \int_{\p\Omega} \psi U_{\e}^{\nu\nu} \p_{\nu}(u_\e-\tilde u)dS
\\ \geq \liminf_{\e} J(u_\e) - \limsup_{\e} \e\int_{\Omega} G(\det D^2 u_\e)dx+ \liminf_{\e}\e \int_{\p\Omega} \psi U_{\e}^{\nu\nu} \p_{\nu}(u_\e-\tilde u)dS.
\end{multline}
From  (\ref{KE}), $\|\tilde u_{\nu}\|_{L^{\infty}(\p\Omega)}\leq C $ and $(u_\e)_\nu \geq (u_\e)^{+}_\nu-C_0$ by the estimate preceding (\ref{fe_u}), we have
\begin{equation*}
 U_{\e}^{\nu\nu} \p_{\nu}(u_\e-\tilde u) \geq -C ((u_\e)^{+}_\nu)^{n-1} -C.
\end{equation*}
From the definition of $\eta_\e$ in (\ref{eta_def}), one has
$$\int_{\p\Omega} (u_\e)^{+}_\nu dS\leq C\e^{-1/n}\eta_\e\quad\text{and }\int_{\p\Omega} ((u_\e)^{+}_\nu)^{n-1} dS\leq C\e^{-(n-1)/n}\eta^{n-1}_\e$$
and hence, 
\begin{equation}
\label{ubdr}
 \e\int_{\p\Omega} \psi U_{\e}^{\nu\nu} \p_{\nu}(u_\e-\tilde u)dS \geq -C\e \int_{\p\Omega}[1 + ((u_\e)^{+}_\nu)^{n-1}]dS\geq -C \e^{\frac{1}{n}}\eta^{n-1}_\e.
\end{equation}
From $0\leq \theta<1/n$ and the convexity of $u_\e$, we can find $C>0$ depending only on $\theta$ and $n$ such that
$$G(\det D^2 u_\e) \leq C [1+ (\det D^2 u_\e)^{1/n}]  \leq C (1 + \Delta u_\e).$$
Therefore, from the divergence theorem, we obtain
\begin{equation}
\int_{\Omega}G(\det D^2 u_\e)dx\leq  C\int_{\Omega} (1 + \Delta u_\e) dx=
C|\Omega| + C\int_{\p\Omega} (u_\e)_\nu dS \leq C(1 +\e^{-1/n}\eta_\e).
\label{udet}
\end{equation}
Combining (\ref{Jv})--(\ref{udet}), we get
$$J(v)\geq \liminf_{\e}J(u_\e) - \limsup_{\e} \left[\e^{(n-1)/n}\eta_\e +  \e^{\frac{1}{n}}\eta^{n-1}_\e\right]$$
which implies (\ref{vue0}). The Claim is proved and the proof of the theorem is complete.
\end{proof}
\begin{proof}[Sketch of proof of Theorem \ref{ethm}] 
The proof is parallel to that of Theorem \ref{mthm} with some minor modifications. We briefly indicate these. Recall that $n=2$. \\
(i) {\it Existence result.} The key is still the a priori estimate  (\ref{ueb}) for a uniformly convex solution $u_\e \in W^{4,q}(\Omega)$ $(q>n)$ to (\ref{Abe})-(\ref{fe}).
Assume that there holds the linear growth condition of $f^0(x, z)$ with respect to $z$ in (\ref{F0l}).  In this case, the quantity $A_\e$ define in (\ref{Ae}) can be estimated from above by $$A_\e \leq C + C_1 \|u_\e\|^2_{L^{\infty}(\Omega_0)}.$$ Thus, the final estimate in (\ref{bounde1}) holds. 
Therefore (\ref{ueb}) holds if $\e$ is small; the constant $C_7$ now depends also on $C_l$.
As a consequence, the existence result of Theorem \ref{ethm} (i) follows as that of Theorem \ref{mthm}(i). \\
(ii) {\it Convergence result.} The new input here is the following well-known trace inequality. There is a constant $C_t=C_t(\Omega_0)>0$ depending only on $\Omega_0$ such that 
\begin{equation}
\label{tineq}
\int_{\Omega_0} |v- u|^2dx \leq C_t \int_{\Omega_0}|Dv- Du|^2dx + C_t \int_{\p\Omega_0} |v-u|^2 dS\quad \text{for all }u, v\in W^{1,2}(\Omega_0).
\end{equation}
Assume that (\ref{F0l}) and (\ref{F1s}) hold. With (\ref{F0l}), we have 
\begin{equation}
\label{F01}
F^0(x, \tilde z)- F^0(x, z)\geq f^0(x, z) (\tilde z-z) - \frac{C_b}{2}|\tilde z- z|^2\quad \text{for all }x\in\Omega_0 \text{ and all  }z, \tilde z\in\R.
\end{equation}
With (\ref{F1s}), we have 
\begin{equation}
\label{F11}
F^1(x, \tilde p)- F^1(x, p)\geq \nabla_p F^1(x, p) \cdot(\tilde p-p) + \frac{\underbar C}{2} |\tilde p- p|^2\quad \text{ for all }x\in\Omega_0 \text{ and all  }p, \tilde p\in\R^2.
\end{equation}
For $\e>0$ small, let $u_\e\in W^{4, q}(\Omega)$ $(q>n)$ be a solution to (\ref{Abe})-(\ref{fe}). \\

{\it Step 1: Convergence of a subsequence of $\{u_\e\}$ to a minimizer of (\ref{p1}).}
As in the proof of Theorem \ref{mthm} (ii), up to extraction of a subsequence, $u_\e$ converges uniformly on compact subsets of $\Omega$, and also in $W^{1,2}(\Omega_0)$, to a convex function $u\in \overline{S}[\varphi,\Omega_0]$. 

We show that $u$ is a minimizer of (\ref{p1}).  
The proof is similar to that of Theorem \ref{mthm}(ii)
except that (\ref{Je}) is replaced by 
 \begin{multline}
\label{Je2}
J_\e(v)-J_\e(u_\e)\geq   \e \int_{\p\Omega} \psi U_{\e}^{\nu\nu} \p_{\nu}(u_\e-\tilde u) dS+  \int_{\p\Omega_0}(v-u_\e)\nabla_p F^1(x, Du_\e(x)) \cdot \nu_0 dS\\
- \frac{\underbar C}{2}\int_{\p \Omega_0} |v- u_\e|^2 dS
\end{multline} 
for all convex functions $v$  in $\overline{\Omega}$ with $v=\tilde u$ in a neighborhood of $\p\Omega$
and provided that
\begin{equation}
\label{bC}
\underbar C\geq C_t C_b + 1.
\end{equation}
In (\ref{Je2}), the function $\tilde u$ is defined as in (\ref{tildeu}).
Clearly, when $v\in \overline{S}[\varphi, \Omega_0]$, the extra boundary term in (\ref{Je2}) disappears in the limit $\e\rightarrow 0$.

Now, we explain how to obtain (\ref{Je2}) from (\ref{F0l}), (\ref{F1s}) and (\ref{bC}). \\
Again, let $v$ be a convex function in $\overline{\Omega}$ with $v=\tilde u$ in a neighborhood of $\p\Omega$.
In the derivation of (\ref{Je}) in \cite[estimate (5.6)]{Le18}, we used (\ref{F01}) with $C_b=0$ (and $\tilde z$ a mollification $v_h$ of $v$ and $z$ the function $u_\e$) and (\ref{F11}) with $\underbar C=0$ (and $\tilde p$ the gradient $Dv_h$ and $p$ the gradient $Du_\e$); see  \cite[estimate (5.10)]{Le18}. With $C_b>0, \underbar C>0$, instead of (\ref{Je}), we have the following:
\begin{multline}
\label{Je1}
J_\e(v)-J_\e(u_\e)\geq   \e \int_{\p\Omega} \psi U_{\e}^{\nu\nu} \p_{\nu}(u_\e-\tilde u) dS+  \int_{\p\Omega_0}(v-u_\e)\nabla_p F^1(x, Du_\e(x)) \cdot \nu_0 dS\\
- \frac{C_b}{2}\int_{\Omega_0} |v- u_\e|^2 dx + \frac{\underbar C}{2} \int_{\Omega_0}|Dv- Du_\e|^2 dx.
\end{multline} 
Thus, provided (\ref{bC}) holds, (\ref{Je2}) follows from (\ref{Je1}) and (\ref{tineq}). \\

{\it Step 2: The whole sequence $\{u_\e\}$ converges to the unique minimizer in $\bar{S}[\varphi,\Omega_0]$  of (\ref{p1}) when (\ref{bC}) holds.} To show this, in view of {\it Step 1}, it suffices to show that (\ref{p1}) has  unique minimizer in $\bar{S}[\varphi,\Omega_0]$. 

Suppose that $u, v\in \bar{S}[\varphi,\Omega_0]$ are two minimizers of the functional $J$ defined by
$$J(u):=\int_{\Omega_0} F(x, u(x), Du(x)) dx$$
where we recall $F(x, z, p)= F^0(x, z) + F^1(x, p)$.

Note that $\frac{u+ v}{2}\in \bar{S}[\varphi,\Omega_0]$. From (\ref{F01}) and (\ref{F11}), we find
$$F^0(x, u(x)) + F^0(x, v(x))\geq 2 F^0\left(x, \frac{u(x) + v(x)}{2}\right) - \frac{C_b}{4}|u(x)- v(x)|^2~\forall x\in\Omega_0$$
and
$$F^1(x, Du(x)) + F^1(x, Dv(x))\geq 2 F^1\left(x, \frac{Du(x) + Dv(x)}{2}\right) + \frac{\underbar{C}}{4}|Du(x)- Dv(x)|^2~\forall x\in\Omega_0.$$
Adding these inequalities and integrating over $\Omega_0$, we find that
\begin{eqnarray}
\label{uvcnx}
J(u) + J(v) &\geq& 2 J(\frac{u+v}{2}) + \frac{\underbar{C}}{4}\int_{\Omega_0}|Du(x)- Dv(x)|^2 dx- \frac{C_b}{4}\int_{\Omega_0}|u(x)- v(x)|^2 dx\nonumber\\
&\geq& 2 J(\frac{u+v}{2}) + \frac{1}{4}\int_{\Omega_0}|Du(x)- Dv(x)|^2 dx.
\end{eqnarray}
In the last inequality of (\ref{uvcnx}), we used (\ref{tineq}) while recalling (\ref{bC}) and $u=v$ on $\p\Omega_0$. By the minimality of $u$ and $v$, we deduce from (\ref{uvcnx}) that $u\equiv v$.  Therefore,  (\ref{p1}) has  unique minimizer in $\bar{S}[\varphi,\Omega_0]$ as asserted.
\end{proof}
{\bf Acknowledgements.} The author would like to thank the referee for carefully reading the paper and providing 
critical suggestions that help improve the exposition of the paper.

\end{document}